\newtheorem{theorem}{Theorem}[section]
\newtheorem*{theorem*}{Theorem}
\newtheorem{lemma}[theorem]{Lemma}
\newtheorem{definition}[theorem]{Definition}
\newtheorem{proposition}[theorem]{Proposition}
\newtheorem{conjecture}[theorem]{Conjecture}
\newtheorem{corollary}[theorem]{Corollary}
\theoremstyle{remark}
\newtheorem{remark}[theorem]{Remark}
\newcommand{\eps}{\varepsilon}
\newcommand{\E}{\mathbb E}
\newcommand{\eq}{\begin{equation}}
\newcommand{\en}{\end{equation}}
\renewcommand{\a}{\mathbf a}
\renewcommand{\b}{\mathbf b}
\newcommand{\G}{\mathcal G}
\newcommand{\HO}{\mathcal F}
\newcommand{\B}{\mathcal B}
\title{Crystallization of random matrix orbits}
\author{Vadim Gorin}
\address[Vadim Gorin]{Department of Mathematics, Massachusetts Institute of Technology, Cambridge, MA, USA, and
 Institute for Information Transmission Problems of Russian Academy of Sciences, Moscow, Russia. E-mail: vadicgor@gmail.com}
\author{Adam W. Marcus}
\address[Adam W. Marcus]{Department of Mathematics, Princeton University, Princeton,
NJ, USA}
\begin{document}

\begin{abstract}
 Three operations on eigenvalues of real/complex/quaternion (corresponding to $\beta=1,2,4$) matrices,
 obtained from cutting out principal corners, adding, and multiplying matrices can be extrapolated to general values of $\beta>0$ through associated special
 functions.
 We show that the $\beta\to\infty$ limit for these operations leads to the finite free projection, additive convolution,
 and multiplicative convolution, respectively.

 The limit is the most transparent for cutting out the corners, where the
 joint distribution of the eigenvalues of principal corners of a uniformly-random general $\beta$  self--adjoint  matrix with fixed eigenvalues is known as the $\beta$--corners process.
 We show that as
 $\beta\to\infty$ these eigenvalues crystallize on an irregular lattice consisting of the roots of derivatives of a single polynomial.
 In the second order, we observe a version of the
 discrete Gaussian Free Field (dGFF) put on top of this lattice, which provides a new explanation as to why the (continuous) Gaussian Free Field governs the global asymptotics of random matrix ensembles.

\end{abstract}

\maketitle

\section{Matrix operations at general $\beta$}
\label{Section_intro}

Fix $N$ and consider two $N\times N$ self--adjoint matrices $A_N$, $B_N$ with
either real, or complex, or quaternion entries. In this article we are mostly
interested in the (real) \emph{eigenvalues} of these matrices.
In particular, we consider three natural
matrix operations, which have non-trivial influence on the eigenvalues.
\begin{enumerate}
\item We can cut out the principal top--left $k\times k$ corner of the matrix,
$A_N\mapsto A_k$, where $A_k$ is the projection of $A_N$.
\item We can add the matrices, $(A_N,B_N)\mapsto A_N+B_N$,
\item We can multiply the matrices $(A_N,B_N)\mapsto A_N B_N$.
\end{enumerate}

In principle, all three operations can be expreseed in terms of multiplication, since for small
$\eps$, $(1+\eps A_N)(1+\eps B_N)\approx 1+\eps(A_N+B_N)$, thus reducing addition to
multiplication (all three operations can be expressed in terms of addition as well, but in a less obvious way, see (\ref{ex:petter})).
On the other hand, if the eigenvalues of $B_N$ are positive, then
$A_N B_N$ and $(B_N)^{1/2} A_N (B_N)^{1/2}$ differ by conjugation, and therefore
have the same spectrum.
If now $B_N$ is the matrix of the projector onto the first
$k$ basis vectors, then $(B_N)^{1/2} A_N (B_N)^{1/2}$ is precisely the $k\times k$
corner of $A_N$, i.e.\ the projection $A_k$. Nevertheless, we will consider all
three operations, as this will provide more insights.

For deterministic matrices, the relations between the spectra of $A_N$ and $A_k$ is
folklore; it is given by simple interlacing conditions (cf.\ \cite{Ner} and
Definition \ref{def_betacorner}). The result of the operation $(A_N,B_N)\to A_N+B_N$
on the spectrum is the subject of the celebrated \emph{Horn's (ex-)conjecture}, and
similar results are now known for $(A_N,B_N)\to A_N B_N$, see \cite{Fulton} for a
review.

Our point of view is different, as we consider \emph{random} $A_N$, $B_N$ with
\emph{invariant} distributions, which means that given the eigenvalues of a matrix,
its eigenvectors are conditionally uniform. This is the same as declaring the
distribution to be invariant under the action of orthogonal/unitary/symplectic group
(depending on the base field) by conjugations --- hence, the name.

It suffices to study the case when the eigenvalues of $A_N$ and $B_N$ are
\emph{deterministic}, since other cases can be obtained as mixtures. We therefore
fix two $N$--tuples of reals $\a=(a_1,\dots,a_N)$, $\b =(b_1,\dots,b_N)$ and define
$A_N$, $B_N$, to be \emph{uniformly random independent} matrices with corresponding
prescribed eigenvalues.

For each of the three operations on matrices, we arrive at an operation of the
eigenvalues, whose result is a \emph{random spectrum}.

\begin{enumerate}
\item $\a \mapsto \pi^\beta_{N\to k}(\a)=(a_1^{(k)},\dots,a_k^{(k)})$,
where the latter is the random $k$ (real) eigenvalues of $A_k$, the corner of $A_N$;

\item $(\a,\b)\mapsto \a \boxplus_{\beta} \b$, where the
latter is the random $N$ (real) eigenvalues of $A_N+B_N$

\item Assume that all eigenvalues in $\a$, $\b$ are \emph{positive},
and define $(\a,\b)\mapsto \a \boxtimes_\beta \b$, where the latter is the random
$N$ (real) eigenvalues of either $A_N B_N$, or $B_N A_N$, or $(B_N)^{1/2} A_N
(B_N)^{1/2}$, or $(A_N)^{1/2} B_N (A_N)^{1/2}$.  (the eigenvalues of all 4 matrices
are the same, and the last two matrices are self--adjoint, which shows that these
eigenvalues are real).
\end{enumerate}

The subscript $\beta$ in the operations $\pi^\beta_{N\to k}$, $\boxplus_{\beta}$,
$\boxtimes_{\beta}$ serves as an indication that they \emph{depend} on whether we
deal with real/complex/quaternion matrices, corresponding to $\beta=1,2,4$. More
generally, these operations can be \emph{extrapolated} to general values of the real
parameter $\beta>0$. For the projection $\pi^\beta_{N\to k}$ the result is known as
$\beta$--corners process, cf.\ \cite{Ner},\cite{GS},\cite{BG_GFF}. For addition
$\boxplus_{\beta}$ and multiplication $\boxtimes_{\beta}$ this is done by
identifying the random eigenvalues with their Laplace-type integral transforms
related to multivariate Bessel functions and Heckman--Opdam hypergeometric
functions, respectively. The operations then turn into simple multiplication of
these special functions and re-expansion of the result in terms of the same
functions\footnote{There is a tricky point in the definition: the positivity of the
coefficients in the re-expansion is a well-known conjecture, which is still open. In
the event that the positivity is not true for some values of $\beta$, the
distributions of $\mathbf a \boxplus_{\beta} \b$, $\a \boxtimes_{\beta} \b$ might fail
to be probability measures, but rather be tempered distributions, i.e.\ continuous
linear functionals on smooth test functions. For
 $\pi^\beta_{N\to k}$ the situation is simpler, as the available explicit formulas make positivity immediate.}, see Section \ref{Section_operations} for the details.

An alternative, yet conjectural, approach to the operations on random matrices at
general values of $\beta$ has been proposed in \cite{Edelman} where the framework of
$\beta$--\emph{ghosts and shadows} was developed. The idea there is to treat arbitrary
$\beta>0$ as the dimension of a (typically non-existent) real--division algebra, by expressing all
probabilistic properties of interest through Dirichlet distributions.
We do not
know whether the technique of \cite{Edelman} can be pushed through to the point of
reproducing the operations $\boxplus_\beta$, $\boxtimes_\beta$. 

Our first result concerns the dependence on $\beta$ of such operations.

\begin{theorem} \label{Theorem_independence}
 Let $z$ be a formal variable. For fixed $\a$ and $\b$,
 define the polynomials $Q^{N\to k}(z)$, $Q^{\boxplus}(z)$, $Q^{\boxtimes}(z)$ of
 degrees $k$, $N$, $N$, respectively, as expected characteristic polynomials of the
 corresponding matrices, i.e.\
 \begin{eqnarray}
 Q^{N\to k}(z)&=&\E \prod_{\alpha\in \pi^\beta_{N\to k}(\a)}
 (z-\alpha),\\
 Q^{\boxplus}(z)&=&\E \prod\limits_{\alpha\in \a \boxplus_{\beta} \b}
  (z-\alpha)\\
Q^{\boxtimes}(z)&=&\E \prod\limits_{\alpha\in \a \boxtimes_{\beta} \b}
  (z-\alpha).
 \end{eqnarray}
 Then the polynomials $Q^{N\to k}(z)$, $Q^{\boxplus}(z)$, $Q^{\boxtimes}(z)$ (i.e.\
 their coefficients) do not depend on the choice of $\beta>0$. They can be computed
 as follows:
 \begin{eqnarray}
 \label{eq_ff_projection} Q^{N\to k}(z)&=&\frac{1}{N(N-1)\cdots (k+1)} \left(\frac{\partial}{\partial
  z}\right)^{N-k} \prod_{i=1}^N (z-a_i)
 \\
 \label{eq_ff_add} Q^{\boxplus}(z)&=&\frac{1}{N!} \sum_{\sigma\in S_N} \prod_{i=1}^N
  (z-a_i-b_{\sigma(i)})\\
 \label{eq_ff_multiply}Q^{\boxtimes}(z)&=&\frac{1}{N!} \sum_{\sigma\in S_N} \prod_{i=1}^N
  (z-a_ib_{\sigma(i)})
 \end{eqnarray}
\end{theorem}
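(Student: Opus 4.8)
The plan is to prove, for each of the three operations, a single identity for the expected elementary symmetric functions of the output spectrum, from which both the $\beta$-independence and the explicit formulas follow at once. Writing $e_k$ for the $k$-th elementary symmetric polynomial, we have $Q(z)=\sum_{k=0}^{N}(-1)^k\,\E[e_k(\cdot)]\,z^{N-k}$ for each of $Q^{N\to k}$, $Q^{\boxplus}$, $Q^{\boxtimes}$, so it suffices to match the $\E[e_k]$ against the coefficients of the right-hand sides. Expanding the three formulas, the targets are
\[
\E\big[e_k(\pi^\beta_{N\to m}(\a))\big]=\frac{\binom{m}{k}}{\binom{N}{k}}\,e_k(\a),\qquad
\E\big[e_k(\a\boxtimes_\beta\b)\big]=\frac{e_k(\a)\,e_k(\b)}{\binom{N}{k}},
\]
together with $\E[e_k(\a\boxplus_\beta\b)]=\sum_{i+j=k}\tfrac{(N-i)!\,(N-j)!}{N!\,(N-i-j)!}\,e_i(\a)\,e_j(\b)$. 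Each right-hand side is manifestly free of $\beta$, so establishing these three identities simultaneously settles the independence claim and all three formulas.

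The engine of the argument is the defining multiplicativity of the operations through Jack polynomials. Writing $\theta=\beta/2$ and $\mathcal J_\lambda(\x)=P_\lambda(\x;\theta)/P_\lambda(1^N;\theta)$ for the normalized Jack polynomial (the Heckman--Opdam function at dominant integral spectral parameter), the operation $\boxtimes_\beta$ is precisely the one for which $\E[\mathcal J_\lambda(\a\boxtimes_\beta\b)]=\mathcal J_\lambda(\a)\,\mathcal J_\lambda(\b)$ for every signature $\lambda$; for $\beta=2$ this is the multiplicativity of normalized Schur functions under the averaged matrix product. The key observation is the classical, \emph{$\theta$-independent} identity $P_{(1^k,0^{N-k})}(\x;\theta)=e_k(\x)$: the Jack polynomial indexed by a single column is the elementary symmetric polynomial for every $\theta$, and $P_{(1^k,0^{N-k})}(1^N)=\binom{N}{k}$. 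Evaluating the multiplicativity identity at $\lambda=(1^k,0^{N-k})$ therefore collapses it, with no surviving $\beta$-dependence, to the multiplicative target.

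The additive and corner identities I would then obtain by degeneration from the multiplicative one rather than by a fresh computation, mirroring the reductions pointed out after the list of operations. For the corner, choosing $\b=(1^k,0^{N-k})$ makes $B_N$ a rank-$k$ projector and $(B_N)^{1/2}A_N(B_N)^{1/2}$ the corner $A_k$ padded by $N-k$ zeros; since zeros do not affect $e_j$, the multiplicative target with $e_j(\b)=\binom{k}{j}$ reproduces the corner target, hence the differentiation formula (\ref{eq_ff_projection}). For addition, substituting $a_i\mapsto 1+\eps a_i$, $b_i\mapsto 1+\eps b_i$ and $z\mapsto 1+\eps w$, a direct check on the explicit formulas gives $\eps^{-N}Q^{\boxtimes}(1+\eps w)\to Q^{\boxplus}(w)$ as $\eps\to 0$, while the matrix identity $(1+\eps A_N)(1+\eps B_N)\approx 1+\eps(A_N+B_N)$ transfers the multiplicative relation to the additive one; on the special-function side this is the degeneration of the normalized Jack polynomials to the multivariate Bessel functions that define $\boxplus_\beta$.

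The main obstacle I anticipate is legitimizing these evaluations and limits in the regime where the operations need not be honest probability measures. Per the footnote, the re-expansion coefficients defining $\a\boxtimes_\beta\b$ and $\a\boxplus_\beta\b$ may only yield tempered distributions for some $\beta$, so both the specialization at $\lambda=(1^k,0^{N-k})$ and the passage $\eps\to0$ must be read as identities of the defining expansions (or as pairings against the relevant Jack/Bessel family), and one must confirm that $\E[e_k]$ is genuinely computed by such a pairing even when positivity fails. Once the multiplicativity identity is secured as an analytic statement for all signatures $\lambda$ and all $\beta>0$, the column-partition specialization and the $\eps\to0$ degeneration are routine; and the corner case can alternatively be verified directly from the explicit $\beta$-corners density by a one-step integration and induction on $N-k$, which gives the cleanest self-contained proof of (\ref{eq_ff_projection}).
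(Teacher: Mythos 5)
Your overall architecture matches the paper's: reduce everything to identities for $\E[e_k]$, and your three targets are all correct --- they coincide with \eqref{eq_x10}, \eqref{eq_x12}, and \eqref{eq_elementary_expectation_product_simple} in the paper's proof. Your multiplicative case is essentially the paper's argument verbatim: specialize the defining multiplicativity $\E \hat J_\lambda(\a\boxtimes_\beta\b)=\hat J_\lambda(\a)\hat J_\lambda(\b)$ (Proposition \ref{Proposition_Jack_observable_product}) at the column $\lambda=(1^k,0^{N-k})$, using the $\theta$-independent identity $J_{(1^k,0^{N-k})}=e_k$.

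Where you diverge --- deriving the corner and additive cases by degeneration from the multiplicative one --- there is a genuine gap at general $\beta$. The matrix pictures you invoke ($B_N$ a rank-$k$ projector; $(1+\eps A_N)(1+\eps B_N)\approx 1+\eps(A_N+B_N)$) are literal only at $\beta=1,2,4$. For general $\beta$ the three operations are defined by three \emph{independent} special-function structures: $\pi^\beta_{N\to k}$ by the corners density \eqref{eq_beta_corners_def}, $\boxplus_\beta$ by the Bessel expansion \eqref{eq_LR_Bes}, and $\boxtimes_\beta$ by the Heckman--Opdam expansion \eqref{eq_LR_HO}. Your corner reduction takes $\b=(1^k,0^{N-k})$, which is not of the form $\exp(\ell)$ required by Definition \ref{Def_multiplication}, so you would need a limit $\ell_i\to-\infty$ for $i>k$ \emph{and} an identification of the limiting law with the $k$-th row of the $\beta$--corners process --- and that identification is precisely the nontrivial content, established nowhere in the definitions. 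Similarly, your additive reduction requires that the $\eps$-scaled Heckman--Opdam connection coefficients converge to the Bessel ones; since positivity (Conjecture \ref{Conjecture_positivity}) is open, these are merely compactly supported distributions, so no weak-convergence-of-probability-measures machinery applies, and verifying convergence of $\E[e_k]$ through the defining expansions essentially forces you to redo a direct computation. You correctly flag both issues as the ``main obstacle,'' but you do not close them, so as written the corner and additive formulas are not proved.

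For comparison, the paper closes both directly and avoids any limit transition between the operations: for the corner it combines the characterization \eqref{eq_Projection_Bessel} with the binomial series \eqref{eq_Bessel_binomial} and the stability of Jack polynomials to obtain Proposition \ref{Proposition_Jack_observable_projection}; for addition it expands $\B_\a\B_\b$ in the same series (Proposition \ref{Proposition_Jack_observable_sum}) and then needs only the single dual-Jack Littlewood--Richardson coefficient $c^{1^{\ell}}_{1^{p},1^{\ell-p}}(J^{\mathrm{dual}};\beta/2)=1$, evaluated via the automorphism $\omega_\theta$ and a leading-monomial comparison. Your fallback for the corner --- a one-step integration against the Dixon--Anderson density \eqref{eq_Dixon_formula} plus induction on $N-k$ --- is indeed viable and would yield a self-contained proof of \eqref{eq_ff_projection}, but you would have to actually carry out that moment computation; as it stands it is a plan rather than a proof.
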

The proof of Theorem \ref{Theorem_independence} is given in Section
\ref{Section_Proof_of_Th_independence}.
Note that Theorem \ref{Theorem_independence} includes the fact that the expectations of the elementary symmetric functions in
the variables $\pi^\beta_{N\to k}(\a)$, $\a\boxplus_\beta \b$, $\a\boxtimes_\beta
\b$ do not depend on $\beta$. A similar observation was recently used in
\cite{Bor_independence} in the context of the Macdonald measures (following an
earlier observation \cite[Page 318]{M} that the application of the Macdonald
$q$--difference operator to the Macdonald reproducing kernel produces an independent
in $q$ factor). We believe that this is more than a coincidence, but rather a manifestation
of a general phenomenon. Indeed, Macdonald polynomials can be degenerated to
Heckman-Opdam and multivariate Bessel functions that we rely on, cf.\ \cite{BG_GFF},
\cite{YiSun_HO} and discussion in the next section. A parallel degeneration leads to
a class of Gibbs probability measures on (continuous) interlacing particle
configurations, as studied in \cite{OV}. On the other hand, the same article (mostly
for $\beta=2$, see however the very last paragraph there) explains that such
measures can be also obtained from $\beta$--corners processes with fixed top rows.
An extension of Theorem \ref{Theorem_independence} to Macdonald polynomials is also
explained in Section \ref{Section_Discrete}.

For classical Gaussian/Laguerre/Jacobi ensembles of random matrices the independence
on $\beta$ of the expectation of the characteristic polynomial was also previously
noticed by some authors, see e.g.\ \cite[Lemma 5.3]{BG_GFF}.
Theorem~\ref{Theorem_independence} in fact holds for a much larger class of
distributions. In \cite{Marcus2}, it is shown that
Theorem~\ref{Theorem_independence} holds for any distribution that is invariant
under conjugation by signed permutation matrices, and it has been noted in
\cite{PudSaw} that the same holds for any distribution that is invariant under
conjugation by matrices in the standard representation of $S_{N+1}$.

\smallskip

 The polynomial operations defined by (\ref{eq_ff_add}) and (\ref{eq_ff_multiply}) have a long history in the literature (dating at least back to \cite[Page 176]{Walsh}) due to an interest in understanding operations that preserve real rootedness of polynomials.
These ideas were more recently extended to the realm of {\em stable} polynomials (a
multivariate version of real rootedness) in a series of works by
 Borcea  and Br\"{a}nd\'{e}n, see, for example, \cite{BB}.
 One of their results implies that any convolution that treats the coefficients of polynomials linearly can be written using the additive convolution (sometimes at the cost of needing to use extra variables).
 In particular, for fixed $\a$ and $\b$, one has
 \begin{equation}\label{ex:petter}
 \sum_{\sigma\in S_N} \prod_{i=1}^N
  (z-a_ib_{\sigma(i)})
  =
  \left(\prod_{i=1}^N b_i \right) \sum_{\sigma\in S_N} \prod_{i=1}^N
  (y - a_i - z/b_{\sigma(i)}) \bigg|_{y = 0}
 \end{equation}
which, in theory, would allow one to compute $Q^{\boxtimes}(z)$ using $Q^{\boxplus}(z)$.
We have not found any advantage to treating both as $Q^{\boxplus}(z)$ (or treating both as $Q^{\boxtimes}(z)$ as was mentioned earlier).

 Recent interest in such operations has come from new techniques involving the expected characteristic polynomials of certain combinations of random matrices.
 In particular, Theorem \ref{Theorem_independence} links us directly to the finite free probability developed in
\cite{MSS}, \cite{Marcus}, where the operations producing $Q^{\boxplus}(z)$,
$Q^{\boxtimes}(z)$ from $\a$, $\b$ are called finite free additive and
multiplicative convolutions. These convolutions are a useful tool in the ``method of
interlacing polynomials'' as introduced in \cite{MSS1} --- in particular, the
additive convolution plays an important role in the same authors' proof of the
existence of Ramanujan (multi)graphs of any degree and any size \cite{MSS4}.

\bigskip

The second main result deals with $\beta\to\infty$ limit, a proof of which appears in Section
\ref{Section_Proof_of_Th_crystal_1}.

\begin{theorem} \label{Theorem_crystal} Fix $k\le N$, $\a$ and $\b$. The distributions
of $\pi^{\beta}_{N\to k}(\a)$, $\a \boxplus_\beta b$, $\a \boxtimes_\beta b$ (in
$k$, $N$, $N$ dimensional spaces, respectively) weakly converge on polynomial
test--functions as $\beta\to\infty$ to $\delta$--measures on the roots of
polynomials $Q^{N\to k}(z)$, $Q^{\boxplus}(z)$, $Q^{\boxtimes}(z)$ of Theorem
\ref{Theorem_independence}, respectively.
\end{theorem}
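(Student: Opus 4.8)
The plan is to recast the asserted convergence as a concentration statement for the coefficients of the random characteristic polynomial, and then to split it into a first-moment part (which is free, via Theorem~\ref{Theorem_independence}) and a fluctuation part (which carries all the analytic content). Write $\lambda_1,\dots,\lambda_M$ for the random spectrum in each of the three cases ($M=k$ in the projection case, $M=N$ otherwise), and let $e_1,\dots,e_M$ be the elementary symmetric functions, so $\prod_i(z-\lambda_i)=\sum_j(-1)^j e_j z^{M-j}$. Every polynomial test--function of the unordered spectrum is a polynomial in $e_1,\dots,e_M$, and the map from the coefficient tuple to the multiset of roots is continuous; hence it suffices to prove that $(e_1,\dots,e_M)$ converges in probability, as $\beta\to\infty$, to the coefficient tuple of the target polynomial $Q^{N\to k}$, $Q^{\boxplus}$, or $Q^{\boxtimes}$. (If positivity fails and the law is only a tempered distribution, this is read as the statement that all joint moments $\E[e_{j_1}\cdots e_{j_r}]$ converge to $\prod_\ell \hat e_{j_\ell}$, where $\hat e$ is the target coefficient tuple.)

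The first moments require no limit at all: Theorem~\ref{Theorem_independence} gives $\E\big[\prod_i(z-\lambda_i)\big]=Q(z)$ for \emph{every} $\beta>0$, so $\E[e_j]=\hat e_j$ identically in $\beta$. Convergence in probability to the deterministic target therefore reduces to the single assertion that the fluctuations vanish, i.e.\ $\mathrm{Var}(e_j)\to 0$ (equivalently $\E[e_ie_j]\to\hat e_i\hat e_j$) as $\beta\to\infty$. This is the only genuinely analytic input, and it is where the three cases part ways.

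For the projection $\pi^\beta_{N\to k}$ I would use the explicit $\beta$--corners density together with the fact that the corners process is Markov in the level index $m$. Conditioned on a fixed level $m+1$ with points $y$, the one--step law of the interlacing level $m$ is proportional to $\prod_{i<j}|\mu_i-\mu_j|\cdot\prod_{i,j}|\mu_i-y_j|^{\beta/2-1}$ on the interlacing polytope, which I rewrite as $\exp\!\big[\tfrac{\beta}{2}\sum_i\log|R(\mu_i)|+O(1)\big]$ with $R(z)=\prod_j(z-y_j)$. Since each $\mu_i$ is confined to its own gap between consecutive $y_j$'s and $\log|R|$ is strictly concave there, a Laplace/large--deviation argument forces concentration, as $\beta\to\infty$, on the unique per--gap maximizer, namely the root of $R'$ in that gap; thus the level--$m$ marginal concentrates on the roots of $R'$, with $\mathrm{Var}=O(1/\beta)$. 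Starting from the frozen top row $\a$ and propagating this one--step concentration down the array by the Markov property and continuity, the level--$k$ marginal concentrates on the roots of the $(N-k)$--th derivative of $\prod_i(z-a_i)$, which are exactly the roots of $Q^{N\to k}$ by \eqref{eq_ff_projection}.

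For the additive and multiplicative convolutions there is no comparably clean density, and the law is specified only through its multivariate Bessel (resp.\ Heckman--Opdam) integral transform, which factorizes over $\a$ and $\b$ under the operation. Here I would extract the moments of $\a\boxplus_\beta\b$ and $\a\boxtimes_\beta\b$ from derivatives of these transforms at the origin and invoke the $\beta\to\infty$ asymptotics of the special functions to show that the second (and higher) moments factorize in the limit; the reduction \eqref{ex:petter} lets one pass from $\boxtimes$ to $\boxplus$. I expect this to be the main obstacle: when the underlying object is only a tempered distribution, no monotone Laplace principle is available, and one must instead push the large--$\beta$ asymptotics of the Bessel/Heckman--Opdam functions to second order, uniformly enough to conclude that the covariances $\E[e_ie_j]-\hat e_i\hat e_j$ vanish. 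By contrast the corners case is soft, its only delicate points being the strict concavity of $\log|R|$ on each gap and the resulting uniqueness of the equilibrium configuration.
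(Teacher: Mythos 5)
Your reduction to factorization of joint moments of the elementary symmetric functions is the right framing (and, for the corners case, your Laplace argument is sound: it is essentially the maximization argument \eqref{eq_maximization}--\eqref{eq_optim_equations} that the paper uses to prove the stronger Theorem \ref{Theorem_crystallization}, where strict concavity of $\sum_j\log|\,\cdot\,-y_j|$ on each gap does force per--gap concentration on the roots of the derivative). But for $\boxplus_\beta$ and $\boxtimes_\beta$ your proposal stops exactly where the theorem's content lies: you describe a program (second--order, uniform large--$\beta$ asymptotics of the Bessel/Heckman--Opdam functions) without carrying it out, and you correctly identify it as the main obstacle. This hard analytic step is in fact unnecessary. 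The paper's proof is purely algebraic: by the binomial expansion \eqref{eq_Bessel_binomial} of Bessel functions into Jack polynomials and the label--variable symmetry, the expectations $\E J_\lambda(\a\boxtimes_\beta\b;\,\beta/2)$, $\E J_\lambda(\pi^\beta_{N\to k}(\a);\,\beta/2)$, $\E J_\lambda(\a\boxplus_\beta\b;\,\beta/2)$ admit \emph{exact closed formulas for every} $\beta$ (Propositions \ref{Proposition_Jack_observable_product}, \ref{Proposition_Jack_observable_projection}, \ref{Proposition_Jack_observable_sum}). One then expands $e_{(\lambda)}=\prod_i e_i^{\lambda_i-\lambda_{i+1}}$ in Jack polynomials, uses Stanley's degeneration \eqref{eq_Jack_infty} (so the transition coefficients tend to $\delta_{\lambda\mu}$), and in the additive case computes $\lim_{\beta\to\infty}c^{\lambda}_{\mu,\nu}(J^{\mathrm{dual}};\beta/2)$ via the automorphism $\omega_\theta$ and \eqref{eq_Jack_zero}, reducing it to multiplication of monomial symmetric functions. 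The factorization of all joint moments then drops out with no uniformity issues and no asymptotics of the special functions themselves --- this exact--formula mechanism is the missing idea in your sketch.

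A second, concrete error: you cannot use \eqref{ex:petter} to ``pass from $\boxtimes$ to $\boxplus$.'' That identity relates the \emph{expected} characteristic polynomials (first moments) of the two finite free convolutions; it carries no information about covariances or higher joint moments of the random spectra, which is precisely what remains to be controlled. (The paper never needs such a transfer: Proposition \ref{Proposition_Jack_observable_product} makes the multiplicative case the easiest of the three, since the Jack moments factorize exactly at every $\beta$.) Relatedly, note that in the regime where Conjecture \ref{Conjecture_positivity} is unproven your Chebyshev/convergence--in--probability language is unavailable, as you acknowledge; but then ``$\mathrm{Var}(e_j)\to0$'' is not a meaningful substitute, and the full joint--moment factorization \eqref{eq_convergence_delta_e} --- the part your plan leaves open --- is the entire statement to be proved.
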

Theorem \ref{Theorem_crystal} implies that $Q^{N\to k}(z)$, $Q^{\boxplus}(z)$,
$Q^{\boxtimes}(z)$ are real-rooted polynomials. For $Q^{N\to k}(z)$ this is easy to
prove directly; proofs of the other two appear in \cite{MSS} using the techniques of \cite{BB}, but also follow directly from the (vastly more general) main result in \cite{MCPC}.

There is a remarkable link of Theorem \ref{Theorem_crystal} for $\pi^{\beta}_{N\to
k}(\a)$ to classical ensembles of random matrices. It is well-known (cf.\
\cite{Kerov}, \cite{DE-CLT}, \cite{BG_GFF}) that the eigenvalues in
Hermite/Laguerre/Jacobi $N$--particle ensembles (i.e.\
G$\beta$E/L$\beta$E/J$\beta$E) concentrate as $\beta\to\infty$ near the roots of the
corresponding orthogonal polynomials. Theorem \ref{Theorem_crystal} predicts that
varying $N$ should be the same as taking derivatives (see \cite{Ner} and
\cite{YiSun} for an explanation why classical ensembles agree with the operation
$\pi^\beta_{N\to k}$), and indeed, the derivatives of Hermite/Laguerre/Jacobi
orthogonal polynomials are again such orthogonal polynomials of smaller degree ---
this is a relation known as the forward shift operator, see e.g.\ \cite{KoSw}.

\bigskip

For the $\beta$--corners processes, capturing $\pi^{N\to k}_\beta(\a)$
simultaneously for all $k=1,\dots,N$ gives a more precise $\beta\to\infty$
asymptotic theorem, which we now present.
For each $N=1,2,\dots$, let $\G_N$ be the set of all Gelfand--Tsetlin patterns of
rank $N$, which are arrays $\{x_{i}^k\}_{1\le i \le k \le N}$ satisfying
$x^{k+1}_i\le x^{k}_i \le x^{k+1}_{i+1}$. We refer to the coordinate $x_i^k$ as the
position of the $i$th particle in the $k$th row.

\begin{definition}\label{def_betacorner} The $\beta$--corners process with top row
$y_1<\dots<y_N$  is the unique probability distribution on the arrays
$\{x^k_i\}_{1\leq i\leq k\leq N}\in \G_N$, such that $x^N_i=y_i$, $i=1,\dots,N$, and
the remaining $N(N-1)/2$ particles have the density
\begin{equation}
\label{eq_beta_corners_def}
 \frac{1}{Z_N}
  \prod_{k=1}^{N-1} \left[\prod_{1\le i<j\le k} (x_j^k-x_i^k)^{2-\beta}\right] \cdot \left[\prod_{a=1}^k \prod_{b=1}^{k+1}
 |x^k_a-x^{k+1}_b|^{\beta/2-1}\right],
\end{equation}
where $Z_N$ is the normalizing constant computed as
\begin{equation}
\label{eq_normalization}
 Z_N=\prod_{k=1}^N \frac{ \Gamma(\beta/2)^k}{\Gamma(k\beta/2)} \cdot \prod_{1\le i < j \le N}
 (y_j-y_i)^{\beta-1}.
\end{equation}
\end{definition}
\begin{remark}
 The inductive (in $N$) computation of $Z_N$ in \eqref{eq_normalization} is the
 Dixon--Anderson integration formula, see \cite{Dixon},\cite{Anderson},
\cite[chapter 4]{For}.
\end{remark}

When $\beta=1,2,4$, the $\beta$--corners process admits a random matrix realization.
In this setting, one considers a uniformly random self--adjoint real/complex/quaternion
(corresponding to $\beta=1,2,4,$ respectively) $N\times N$ matrix $[A_{i,j}]$ with
fixed eigenvalues $a_1,\dots,a_N$. Neretin shows in \cite{Ner}\footnote{ Neretin
remarks that the statement is a folklore going back at least to the work of Gelfand
and Naimark in 1950s.} (see also \cite{Baryshnikov} for $\beta=2$ case) that $x_i^k$
can be identified with $k$ eigenvalues of the $k\times k$ principal corner
$[A_{i,j}]_{i,j=1}^k$.

\begin{definition} \label{def_beta_infty_corner}
 The $\infty$--corners process with top row $\a=(a_1<\dots<a_N)$, is a deterministic
 array of particles $\{x_i^k\}_{1\le i \le k \le N}\in \G_N$ such that for each
 $k=1,2\dots,N$, ${x_1^k<x_2^k<\dots<x_k^k}$ are $k$ roots of $Q^{N\to k}(z)$, which means
 \begin{equation}
   \prod_{i=1}^k (z-x_i^k)= \frac{1}{N(N-1)\cdots (k+1)}
   \left(\frac{\partial}{\partial z}\right)^{N-k} \left[ \prod_{i=1}^N (z-a_i)
   \right],
 \end{equation}
and equipped with a Gaussian field $\{\xi_{i}^k\}_{1\le i \le k \le N}$, such that
$\xi_1^N=\xi_2^N=\dots=\xi^N_N=0$ and the remaining coordinates have density
proportional to
\begin{equation}
\label{eq_Gaussian_density}
 \exp\left(\sum_{k=1}^{N-1} \left[ \sum_{1\le i <j \le k} \frac{ (\xi_i^k-\xi_j^k)^2}{2 (x_i^k-x_j^k)^2}
 - \sum_{a=1}^k \sum_{b=1}^{k+1} \frac{ (\xi_a^k-\xi_b^{k+1})^2 }{4 (x_a^k-x_b^{k+1})^2}
 \right]\right).
\end{equation}
\end{definition}

 We call $\{x_i^k\}_{1\le i \le k \le N}\in \G_N$ the \emph{deterministic part} of the
 $\infty$--corners process and $\{\xi_{i}^k\}_{1\le i \le k \le N}$ the
 \emph{discrete Gaussian Free Field} on top of it.
 The fact that the $\infty$--corners process captures $\beta\to\infty$ behavior of the $\beta$--corners processes is proved in Section \ref{Section_Proof_of_Th_crystal_2}.

\begin{theorem} \label{Theorem_crystallization}
 Fix $y_1<\dots<y_N$, take a $\beta$--corners process $\{x_i^k(\beta)\}_{1\le i \le k \le N}$ with top row
 $y_1<\dots<y_N$, and $\infty$--corners process $\{\tilde x_i^k, \xi_i^k\}_{1\le i \le k \le
 N}$ with the same top row. Then we have the
 convergence in distribution:
 \begin{equation}
 \label{eq_crystallization_formula}
  \tilde x_i^k=\lim_{\beta\to\infty} x_i^k(\beta),\quad \quad \xi_i^k
  =\lim_{\beta\to\infty} \sqrt{\beta} \bigl(x_i^k(\beta)-\tilde x_i^k\bigr), \qquad 1\le i \le k \le
  N.
 \end{equation}
\end{theorem}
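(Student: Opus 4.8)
The plan is to run a Laplace (saddle-point) analysis directly on the explicit joint density \eqref{eq_beta_corners_def} of the $\beta$--corners process, with $\beta$ as the large parameter. Write the density as $\frac{1}{Z_N}\exp\bigl(\beta H(\x)+G(\x)\bigr)$, where
\[
 H(\x)=\sum_{k=1}^{N-1}\Bigl[-\sum_{1\le i<j\le k}\log(x_j^k-x_i^k)+\tfrac12\sum_{a=1}^k\sum_{b=1}^{k+1}\log|x_a^k-x_b^{k+1}|\Bigr]
\]
collects the terms linear in $\beta$ and $G$ is the $\beta$--independent remainder. The strategy is: (i) locate the maximizer of $H$ over the interlacing polytope with top row fixed to $y$ and identify it with the deterministic array $\tilde x$ of Definition~\ref{def_beta_infty_corner}; (ii) expand to second order in the rescaled variables $\xi_i^k=\sqrt\beta\,(x_i^k-\tilde x_i^k)$; (iii) read off the limiting Gaussian field from $\exp\bigl(\tfrac12\,\xi^{\top}\nabla^2 H(\tilde x)\,\xi\bigr)$. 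Since $G(\tilde x+\xi/\sqrt\beta)\to G(\tilde x)$, the remainder $G$ contributes only a constant and does not affect the limit.

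First I would verify that $\tilde x$ is a critical point of $H$, using the derivative relation $p_k=\frac{1}{k+1}p_{k+1}'$ between consecutive rows of $\tilde x$, where $p_k(z)=\prod_i(z-\tilde x_i^k)$ (this is exactly Definition~\ref{def_beta_infty_corner}). Differentiating $H$ in an interior coordinate $x_p^m$ produces three groups of terms: the self--interaction within row $m$, equal to $-\frac{p_m''(\tilde x_p^m)}{2p_m'(\tilde x_p^m)}$; the coupling to the row above, proportional to $\frac{p_{m+1}'(\tilde x_p^m)}{p_{m+1}(\tilde x_p^m)}$, which vanishes because $p_{m+1}'=(m+1)p_m$ is zero at a root of $p_m$; and the coupling to the row below, equal to $\frac{p_{m-1}'(\tilde x_p^m)}{2p_{m-1}(\tilde x_p^m)}=\frac{p_m''(\tilde x_p^m)}{2p_m'(\tilde x_p^m)}$, which cancels the self--interaction. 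Computing the Hessian is then immediate: the term $-\log(x_j^k-x_i^k)$ contributes $\frac{(\xi_i^k-\xi_j^k)^2}{2(x_i^k-x_j^k)^2}$ and the term $\frac12\log|x_a^k-x_b^{k+1}|$ contributes $-\frac{(\xi_a^k-\xi_b^{k+1})^2}{4(x_a^k-x_b^{k+1})^2}$, so that $\tfrac12\,\xi^{\top}\nabla^2 H(\tilde x)\,\xi$ is \emph{exactly} the exponent in \eqref{eq_Gaussian_density}. Thus the formal saddle-point computation already produces both the deterministic part and the Gaussian density of the $\infty$--corners process.

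The hard part is upgrading this formal computation to a genuine limit theorem, and its crux is showing that $\tilde x$ is the \emph{unique global} maximizer of $H$ with a \emph{negative--definite} Hessian --- equivalently, that \eqref{eq_Gaussian_density} really defines a normalizable Gaussian. This is delicate because the within--row terms enter $\nabla^2 H$ with the ``wrong'' (positive) sign and must be dominated by the between--row terms. I would resolve this through the Gibbs/Markov structure of \eqref{eq_beta_corners_def}: conditionally on row $k+1$, rows $1,\dots,k$ form a $\beta$--corners process with top row $x^{k+1}$, so the array is a Markov chain down the rows, and integrating out the rows below row $k$ yields an extra factor $\prod_{i<j}(x_j^k-x_i^k)^{\beta-1}$ (iterated Dixon--Anderson). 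At the level of fluctuations this factor contributes $-\sum_{i<j}\frac{(\xi_i^k-\xi_j^k)^2}{2(x_i^k-x_j^k)^2}$ and precisely cancels the offending within--row term. After this cancellation each downward conditional step is a single--level Dixon--Anderson problem $\propto\prod_{i<j}(x_j-x_i)\prod_{a,b}|x_a-w_b|^{\beta/2-1}$, in which the particles decouple at leading order, each concentrating at the unique root of $\frac{d}{dz}\prod_b(z-w_b)$ in its interlacing interval with a manifestly negative--definite (diagonal) second variation. Realizing the limit as the resulting autoregressive Gaussian $\xi^{k}=J_{k+1}\,\xi^{k+1}+\zeta^k$, with $\zeta^k$ an independent nondegenerate Gaussian and $J_{k+1}$ the Jacobian of the map sending a row to the roots of its derivative, makes nondegeneracy automatic and pins down the global maximizer.

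It remains to control the Laplace approximation itself. I would check that no mass escapes to the boundary of the interlacing polytope: at a collision $x_i^k,x_{i+1}^k\to x_{i+1}^{k+1}$ the blow--up $(x_{i+1}^k-x_i^k)^{2-\beta}$ is counterbalanced by the vanishing between--row factors, and a scaling estimate shows the contribution is suppressed, while on the remaining faces $\beta H\to-\infty$; together with negative--definiteness this confines the mass to a $\beta^{-1/2}$--neighborhood of $\tilde x$. Standard Laplace/Watson estimates on $\exp(\beta H+G)$, with the quadratic approximation above and uniform control of the cubic--and--higher remainder on that neighborhood, then give tightness of $\sqrt\beta\,(x_i^k(\beta)-\tilde x_i^k)$ and convergence of all polynomial moments, yielding both $\tilde x_i^k=\lim_\beta x_i^k(\beta)$ and convergence in distribution of the rescaled fluctuations to the field \eqref{eq_Gaussian_density}, as asserted in \eqref{eq_crystallization_formula}. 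The main obstacle throughout is the nondegeneracy/global--maximality input of the preceding paragraph; once it is in place, the rest is routine asymptotic analysis.
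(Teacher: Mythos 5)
Your proposal is correct and follows essentially the same route as the paper: the paper likewise expands the explicit density \eqref{eq_beta_corners_def} around the array of derivative roots (identified via the maximization \eqref{eq_maximization}--\eqref{eq_optim_equations}, i.e.\ your criticality computation), cancels the linear term via the polynomial identities \eqref{eq_x1}--\eqref{eq_x3}, and reads off \eqref{eq_Gaussian_density} from the quadratic term. Your resolution of the indefinite-looking quadratic form through the Markov/Dixon--Anderson structure is also the paper's mechanism: its identity \eqref{eq_Gaussian_evaluation}, obtained as the $\beta\to\infty$ limit of the unit total mass of the conditional density \eqref{eq_cond_denstity}, integrates the rows sequentially and cancels the positive within-row terms, which is precisely the Gaussian-limit version of your marginalization producing the factor $\prod_{i<j}(x_j^k-x_i^k)^{\beta-1}$.
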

The proof of Theorem \ref{Theorem_crystallization} is given in Section
\ref{Section_proofs}. The difference between Theorem \ref{Theorem_crystal} for
$\pi^{N\to k}_\beta(\a)$ and Theorem \ref{Theorem_crystallization} is that the
latter captures not only a deterministic limit (Law of Large Numbers), but also
Gaussian fluctuations around it. It would be interesting to do the same for $\a
\boxplus_\beta \b$ and $\a \boxtimes_\beta \b$ as $\beta\to\infty$, but we do not
have any theorems in this direction so far.

\bigskip

There is a significant amount of literature studying the fluctuations of
$\beta$--corners processes as $N\to\infty$ with $\beta$ being fixed.
 When the top
row $\a$ is \emph{random} with specific distribution (rather than deterministically
fixed in our setting), then the asymptotic centered fluctuations were identified
with a pullback of the 2d Gaussian Free Field with Dirichlet boundary conditions in
\cite{Bor_CLT}, \cite{BG_GFF}, \cite{GZ}. For a discrete analogue of $\beta=2$ case
(cf.\ \cite[Section 1.3]{BG_LLN} for the link between discrete and continuous
setting) similar results are also known \cite{Petrov_GFF}, \cite{BG_CLT} in the
deterministic $\a$ case.
Despite these works, the conceptual reasons for the appearance of the
Gaussian Free Field in $\beta$--random matrices has remained somewhat unclear.

On the other hand, the $\beta=\infty$ case, which is the joint distribution of
$\{\xi^k_i\}$ in Definition \ref{def_betacorner} is a version of the \emph{Discrete}
Gaussian Free Field. In many examples the convergence of dGFF to GFF is known, cf.\
\cite{Sheffield}, \cite{Sheffield_Schramm}, \cite{Werner_GFF}, which provides a new
insight on why the continuous Gaussian Free Field should show up in $N\to\infty$
limit of the random matrices. Let us however emphasize that the convergence of the
$\infty$-corners process toward GFF does not follow from any known results, since
the points $\{x^k_i\}$, where dGFF lives, vary in a very non-trivial way as
$N\to\infty$, and, in addition, the exponent in \eqref{eq_Gaussian_density} has both
positive and negative terms.

\medskip

At $\beta=1,2,4$ the $N\to\infty$ limit of the operations $\a\boxplus_{\beta} \b$
and $\a\boxtimes_{\beta} \b$ are also well-studied. In particular, the first--order
behavior (``Law of Large Numbers'') is linked to the free probability theory cf.\
\cite{VDN}, \cite{NS}, and the limiting operations  are known as free (additive and
multiplicative) convolutions, see e.g.\ \cite{Vo}, \cite{Vo2}, \cite{Vo3}.
Analogous statements for $\beta=+\infty$ and fixed $N$ are proven in \cite{Marcus}.
Heuristically, one
would like to be able to claim that the $\beta$-independence of the expectations of Theorem
\ref{Theorem_independence} should imply the $\beta$-independence of the
$N\to\infty$ limit for $\a\boxplus_{\beta} \b$ and $\a\boxtimes_{\beta} \b$;
however, we do not know how to produce a rigorous proof along this line, and the
problem of $N\to\infty$ limits for general values of $\beta>0$ remains open.

The global Gaussian fluctuations as $N\to\infty$ for $\a\boxplus_{\beta} \b$ and
$\a\boxtimes_{\beta} \b$ has been addressed for $\beta=2$ in a free probability context using ``second order freeness'' \cite{MiS}, \cite{MiSS}.

\bigskip

There are additional questions concerning the asymptotics as $N\to\infty$. One recent topic is the
study of the global fluctuations for the difference between two adjacent levels in the
corners processes (again at $\beta=1,2,4$) \cite{Kerov}, \cite{Bufetov_LLN},
\cite{ED}, \cite{GZ}, \cite{Sodin}.
The link with taking derivatives of a
polynomial that appears in Theorem \ref{Theorem_crystal} is somewhat visible in the results of
\cite{Sodin}.

Another popular theme is to study \emph{local limits}, e.g.\ the spacings between
individual particles (eigenvalues) in the bulk. At $\beta=2$ the local limits of the
corners--processes were shown to coincide with the sine process in \cite{Metcalfe}, and
similar local asymptotics for $\a \boxplus_{\beta=2} \b$ was proven in
\cite{CheLandon}. At general values of $\beta>0$ one would expect an appearance of
the $\beta$--Sine process, but this is not proven rigorously. What is  known, is
that as $\beta\to\infty$ the $\beta$--Sine processes crystallize on a lattice, see
\cite{Sandier_Serfaty}, \cite{Leble_Serfaty}, and indeed for $\beta=\infty$ version
of the corners process such limit theorems leading to a lattice are known, see
\cite{Farmer-Rhoades}.

\bigskip

The rest of the article is organized as follows. In Section \ref{Section_operations}
we give formal definitions of the operations $\pi^{N\to k}_\beta(\a)$,
$\a\boxtimes_\beta \b$, $\a\boxplus_\beta\b$. Proofs of Theorems
\ref{Theorem_independence}, \ref{Theorem_crystal}, \ref{Theorem_crystallization} are
given in Section \ref{Section_proofs}. Section \ref{Section_Discrete} discusses
extensions to discrete $(q,t)$--setting related to Macdonald polynomials.

\bigskip

{\bf Acknowledgements.}
V.G.~is partially supported by the NSF grants DMS-1407562,
DMS-1664619, by the Sloan Research Fellowship, and by The Foundation Sciences
Math\'{e}matiques de Paris. A. W. M.~is partially supported by NSF grant
DMS-1552520. The authors would like to thank P.~Br\"{a}nd\'{e}n, A.~Borodin, G.~Olshanski, and S.~Sodin
for useful discussions. The authors thank the referee for the comments and suggestions.

\section{Special functions and operations}
\label{Section_operations}

We use several classes of symmetric functions of representation--theoretic origin,
which are degenerations of Macdonald polynomials. We refer to \cite{M} for general
information about these polynomials and only summarize here the required facts.

Let $\Lambda_N$ denote the algebra of symmetric polynomials in $N$--variables
$x_1,\dots,x_N$. Define a difference operator $D_{q,t}$ acting in $\Lambda_N$
through
$$
 D_{q,t} f = \sum_{i=1}^N \left[\prod_{j\ne i} \frac{t x_i-x_j}{x_i-x_j} \right]
 T_{q,x_i},
$$
where $T_{q,x_i}$ is the $q$--shift operator acting as
$$
 [T_{q,x_i} f](x_1,\dots,x_N)=f(x_1,\dots,x_{i-1}, q x_i, x_{i+1},\dots,x_N).
$$
$D_{q,t}$ possesses a complete set of eigenfunctions in $\Lambda_N$, which are the
Macdonald polynomials $P_\lambda(x_1,\dots,x_N;q,t)$ parameterized by $N$--tuples of
non-negative integers $\lambda=(\lambda_1\ge\dots\ge \lambda_N)$, which are called
non-negative \emph{signatures} (of rank $N$):
\begin{equation}
\label{eq_Macdonald_difference}
 D_{q,t} P_\lambda(\cdot;q,t)= \left[\sum_{i=1}^N q^{\lambda_i} t^{N-i} \right]
 P_\lambda(\cdot;q,t).
\end{equation}
$P_\lambda(\cdot;\,q,t)$ is a homogeneous polynomial of  degree
$|\lambda|:=\lambda_1+\dots+\lambda_N$. The leading monomial (with respect to the
lexicographic ordering) of $P_\lambda(\cdot;q,t)$ is $x_1^{\lambda_1}
x_2^{\lambda_2}\cdots x_N^{\lambda_N}$. We normalize the polynomials by declaring
the coefficient of this monomial to be $1$.

The definition \eqref{eq_Macdonald_difference} readily implies the shift property
$$
 P_{\lambda+1}(x_1,\dots,x_N;\,q,t)=(x_1 x_2\cdots x_N)
 P_{\lambda}(x_1,\dots,x_N;\,q,t), \quad
 \lambda+1=(\lambda_1+1,\lambda_2+1,\dots,\lambda_N+1),
$$
which allows one to extend the definition of $P_\lambda$ to general
signatures $\lambda$ with (possibly) negative integer coordinates $\lambda_i$. The
result is a Laurent polynomial.

Another property that links negative and positive signatures is
\begin{equation}
\label{eq_Mac_minus_1}
 P_{\lambda}(x_1^{-1},\dots,x_N^{-1};\,q,t)=P_{-\lambda}(x_1,\dots,x_N;\,q,t), \quad
 -\lambda=(-\lambda_N,-\lambda_{N-1},\dots,-\lambda_1).
\end{equation}
The property \eqref{eq_Mac_minus_1} is somewhat hard to locate in standard references in the
explicit form, but it readily follows from the combinatorial formula for Macdonald polynomials
and \cite[Example VI.6.2 (b)]{M}.

If $q=t$, then Macdonald polynomials coincide with Schur polynomials, and so can be given by an
explicit determinantal formula
$$
 P_\lambda(x_1,\dots,x_N;\, q,q)=s_\lambda(x_1,\dots,x_N)=\frac{\det\bigl[
 x_i^{\lambda_j+N-j}\bigr]_{i,j=1}^N}{\prod_{1\le i<j\le N}(x_i-x_j)}.
$$

We mostly do not use Macdonald polynomials directly, instead relying on three
different degenerations. The Jack symmetric polynomials $J_\lambda$ are defined for
$\theta>0$ through (cf.\ \cite[Chapter VI, Section 10]{M}):
$$
 J_\lambda(x_1,\dots,x_N;\, \theta)=\lim_{q\to 1}P_\lambda(x_1,\dots,x_N;
 \, q,q^{\theta}).
$$
The Heckman--Opdam hypergeometric (for root system of type A, cf.\ \cite{HO},
\cite{Op}, \cite{HS}) functions $\HO_r$ are defined for $\theta>0$ and $N$--tuples
of \emph{distinct real} labels $r=(r_1> r_2>\dots>r_N)$ through (cf.\ \cite[Section
6]{BG_GFF}, \cite{YiSun_HO} and references therein)
\begin{equation}
\label{eq_HO_limit_params}
 t=q^{\theta},\quad q=\exp(-\eps),\quad \lambda = \lfloor \eps^{-1}
 (r_1,\dots,r_N) \rfloor ,\quad
 x_i=\exp(\eps y_i),
\end{equation}
$$
 \HO_r(y_1,\dots,y_N;\,\theta)= \lim_{\eps\to 0} \eps^{\theta N(N-1)/2}
 P_\lambda(x_1,\dots,x_N;\,q,t).
$$
The multivariate Bessel functions $\B_r$ (cf.\ \cite{Dunkl}, \cite{Opdam},
\cite{Jeu}, \cite{Ok_Olsh_shifted_Jack}, \cite{GK}) are defined for $\theta>0$ and
$N$--tuples of distinct real labels $r=(r_1> r_2>\dots>r_N)$ through
$$
 \B_r(z_1,\dots,z_N;\,\theta)=\lim_{\eps\to 0} \HO_{\eps r} (\eps^{-1}
 z_1,\dots,\eps^{-1} z_N;\, \theta).
$$
Or, equivalently (cf.\ \cite[Section 4]{Ok_Olsh_shifted_Jack}),
\begin{equation}
\label{eq_Bessel_limit_params} \lambda = \lfloor \eps^{-1}
 (r_1,\dots,r_N) \rfloor ,\quad
 x_i=\exp(\eps z_i),
\end{equation}
$$
 \B_r(z_1,\dots,z_N;\,\theta)=\lim_{\eps\to 0} \eps^{\theta N(N-1)/2}
 J_\lambda(x_1,\dots,x_N;\,\theta).
$$
We also need the \emph{normalized} versions of all these symmetric functions:
\begin{eqnarray*}
 \hat
 P_\lambda(\cdot;\,q,t)=\frac{P_\lambda(\cdot;,q,t)}{P_\lambda(1,t,\dots,t^{N-1};\,q,t)},
 & & \hat J_\lambda(\cdot;\, \theta)=\frac{J_\lambda(\cdot;\,
 \theta)}{J_\lambda(1,1,\dots,1;\, \theta)},\\
 \hat \HO_r(\cdot;\, \theta)=\frac{\HO(\cdot;\,
 \theta)}{\HO_r(0,-\theta,\dots,(1-N)\theta;\, \theta)},&& \hat
 \B_r(\cdot;\,\theta)=\frac{\B_r(\cdot;\,\theta)}{\B_r(0,\dots,0;\, \theta)}.
\end{eqnarray*}
One advantage of the normalized functions $\hat \HO$ and $\hat \B$ is that one can
now extend their definition to labels $r=(r_1\ge \dots\ge r_N)$ with possibly
coinciding coordinates through limit transitions.

The following property is the label--variable symmetry of the Macdonald (Laurent)
polynomials, see \cite[Chapter VI, Section 6]{M}:
\begin{equation}
\label{eq_Macd_symmetry}
 \hat P_\mu(q^{\lambda_1} t^{N-1},q^{\lambda_2} t^{N-2},\dots,q^{\lambda_N};\,
 q,t) =  \hat P_\lambda(q^{\mu_1} t^{N-1},q^{\mu_2} t^{N-2},\dots,q^{\mu_N};\,
 q,t).
\end{equation}
It implies the following two symmetries for the degenerations:
\begin{equation}
\label{eq_HO_symmetry} \hat
\HO_r(-\mu_1-(N-1)\theta,-\mu_2-(N-2)\theta,\dots,-\mu_N;\, \theta)=\hat
J_\mu(\exp(-r_1),\exp(-r_2),\dots,\exp(-r_N);\, \theta)
\end{equation}
\begin{equation}
\label{eq_Bessel_symmetry} \hat \B_r(\ell_1,\dots,\ell_N;\,\theta)=\hat
\B_\ell(r_1,\dots,r_N;\, \theta).
\end{equation}

\bigskip

The Bessel function $\hat \B_r$ coincides with a (partial) Laplace tranform of the
$\beta=2\theta$--corners process $\{x_i^k\}_{1\le i \le k \le N}$ with top row
$r=(r_1,\dots,r_N)$ of Definition \ref{def_betacorner}, given by
\begin{equation}
\label{eq_Bessel_combinatorial}
 \hat \B_{r}(z_1,\dots,z_N;\,\theta)= \E_{\{x^k_i\}} \left[\exp\left(\sum_{k=1}^{N} z_k
 \cdot \left(\sum_{i=1}^{k} x_i^k-\sum_{j=1}^{k-1} x_j^{k-1}\right)  \right) \right].
\end{equation}
This connection is known as the \emph{combinatorial formula} for Bessel functions;
\eqref{eq_Bessel_combinatorial} is a limit of similar combinatorial formulas for
Macdonald and Jack polynomials, Heckman--Opdam hypergeometric functions.

\bigskip

We will now discuss the connection of the degenerations of Macdonald
polynomials to the addition and multiplication of random matrices. For that we
define four types of connection coefficients, which are the
variants/generalizations of the \emph{Littlewood--Richardson coefficients}. They are
 determined by the following decompositions

\begin{eqnarray}
 \label{eq_LR_Mac}\hat P_\lambda(\cdot;\,q,t) \hat P_\mu(\cdot;\, q,t)&=&\sum_{\nu} c_{\lambda,\mu}^{\nu}(\hat P;\,
 q,t) \hat P_\nu(\cdot;\,q,t),\\
 \label{eq_LR_Jack} \hat J_\lambda(\cdot;\,\theta) \hat J_\mu(\cdot;\, \theta)&=&\sum_{\nu} c_{\lambda,\mu}^{\nu}(\hat J;\,
 \theta) \hat J_\nu(\cdot;\, \theta),\\
 \label{eq_LR_HO}\hat \HO_r(\cdot;\,\theta) \hat \HO_\ell(\cdot;\,
\theta)&=&\int_s c_{\ell,r}^{s}(\hat \HO;\,
 \theta) \hat \HO_s(\cdot;\, \theta)\, ds,\\
 \label{eq_LR_Bes} \hat \B_r(\cdot;\,\theta) \hat \B_\ell(\cdot;\,
\theta)&=&\int_{s} c_{\ell,r}^{s}(\hat \B;\,
 \theta) \hat \B_s(\cdot;\, \theta)\, ds.
 \end{eqnarray}
The meaning of the decomposition in \eqref{eq_LR_Mac}, \eqref{eq_LR_Jack} is
straightforward, as this is an expansion of a symmetric (Laurent) polynomial on the
left--hand side by functions forming a linear basis of the space of all such
polynomials. The sums in \eqref{eq_LR_Mac}, \eqref{eq_LR_Jack} have only finitely
many non-zero terms; in more details, the coefficients $c_{\lambda,\mu}^{\nu}(\hat
P;\, q,t)$ and $c_{\lambda,\mu}^{\nu}(\hat J;\, \theta)$ are non-zero only if
\begin{equation}
\label{eq_LR_support} \nu_1+\dots+\nu_N=(\lambda_1+\mu_1)+\dots+(\lambda_N+\mu_N),\
\quad \lambda_N+\mu_N \le \nu_i \le \lambda_1+\mu_1,\, 1\le i \le N.
\end{equation}

Although, in principle, $c_{\lambda,\mu}^{\nu}(\hat J;\,
 \theta)$, $c_{\ell,r}^{s}(\hat \HO;\,
 \theta)$, and $c_{\ell,r}^{s}(\hat \B;\,
 \theta)$ can be all obtained from $c_{\lambda,\mu}^{\nu}(\hat P;\,
 \theta)$ by limit transitions, the exact mathematical
 meaning of \eqref{eq_LR_HO}, \eqref{eq_LR_Bes} is a bit more delicate. There is a
solid amount of literature devoted to the expansions of functions in appropriate
spaces into the integrals of $\HO$ and $\B$ functions; this is a far--reaching
generalization of the conventional Fourier transform, and it is known under the name
Cherednik transform for $\HO$ and Dunkl transform for $\B$. We refer to \cite{Anker}
for a recent brief review with many references to the original articles and more
detailed treatments. In particular, it is rigorously known that
$$
 f\mapsto \int_{s} c_{\ell,r}^{s}(\hat \HO;\,
 \theta) \cdot f\, ds, \quad  f\mapsto \int_{s} c_{\ell,r}^{s}(\hat \B;\,
 \theta) \cdot f\, ds,
$$
are well-defined as distributions with compact support. The support of
$c_{\ell,r}^{s}(\hat \HO;\,
 \theta)$, $c_{\ell,r}^{s}(\hat \B;\,
 \theta)$, is a subset of those $s$ which satisfy
\begin{equation}
\label{eq_LR_cont_support} s_1+\dots+s_N=(r_1+\ell_1)+\dots+(r_N+\ell_N),\ \quad
r_N+\ell_N \le s_i \le r_1+\ell_1,\, 1\le i \le N.
\end{equation}
 However, much more is
conjectured, as we discuss below.

\begin{conjecture}\label{Conjecture_positivity} All the coefficients $c_{\lambda,\mu}^{\nu}(\hat P;\,
 q,t)$ for $0<q,t<1$, (and hence also $c_{\lambda,\mu}^{\nu}(\hat J;\,
 \theta)$, $c_{\ell,r}^{s}(\hat \HO;\,
 \theta)$, $c_{\ell,r}^{s}(\hat \B;\,
 \theta)$ for $\theta>0$) are non-negative.
\end{conjecture}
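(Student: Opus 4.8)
The plan is to reduce all four positivity statements to the single one for Macdonald polynomials and then to attack that statement either combinatorially or analytically. For the reduction, write $P_\lambda P_\mu=\sum_\nu f_{\lambda,\mu}^\nu P_\nu$ for the structure constants of the unnormalized polynomials. Dividing by the principal specializations and comparing with \eqref{eq_LR_Mac} gives
\[
 c_{\lambda,\mu}^\nu(\hat P;\,q,t)=f_{\lambda,\mu}^\nu\cdot
 \frac{P_\nu(1,t,\dots,t^{N-1};\,q,t)}{P_\lambda(1,t,\dots,t^{N-1};\,q,t)\,P_\mu(1,t,\dots,t^{N-1};\,q,t)}.
\]
By the principal specialization formula \cite{M}, each factor $P_\kappa(1,t,\dots,t^{N-1};\,q,t)$ is a positive power of $t$ times a ratio of terms of the form $1-q^{a}t^{b}$ with integers $a\ge 0$, $b\ge 1$, and every such term is strictly positive for $0<q,t<1$. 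Hence $c_{\lambda,\mu}^\nu(\hat P;\,q,t)$ has the same sign as $f_{\lambda,\mu}^\nu$, so the Macdonald part of the conjecture is equivalent to $f_{\lambda,\mu}^\nu\ge 0$ for $0<q,t<1$. The remaining three statements then follow automatically, being limits of non-negative quantities under the degenerations of Section \ref{Section_operations}; thus it suffices to treat the Macdonald case.

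Next I would collect the loci on which positivity is already known, to serve as anchors. Along $q=t$ the $f_{\lambda,\mu}^\nu$ are the classical Littlewood--Richardson numbers, hence non-negative integers; along $q=0$ they are Hall--Littlewood structure constants, known to lie in $\nn[t]$. At $\theta\in\{1/2,1,2\}$, i.e.\ $\beta=1,2,4$, the Bessel coefficient $c_{\ell,r}^s(\hat\B;\,\theta)$ in \eqref{eq_LR_Bes} is the density of the spectrum of $A_N+B_N$ for genuine random invariant matrices, hence non-negative. Each $f_{\lambda,\mu}^\nu$ is a rational function of $(q,t)$ with strictly positive denominator on $(0,1)^2$, so it is continuous there; the obstruction is that the known loci are lower--dimensional, so continuity alone cannot propagate non-negativity to the full square.

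I would then pursue two complementary routes. The combinatorial route is to establish a positivity rule for $f_{\lambda,\mu}^\nu$ uniformly in $(q,t)$ that specializes to the Littlewood--Richardson and Hall--Littlewood rules, for instance through an inductive argument built on the Pieri rules or through an alcove--walk / Yip--type model for the structure constants. The analytic route operates at the degenerate Bessel level: identify $c_{\ell,r}^s(\hat\B;\,\theta)$ with the kernel of the generalized translation (product formula) for multivariate Bessel functions of type $A$, and invoke a positive integral representation of the Dunkl kernel of R\"osler type to conclude $c_{\ell,r}^s(\hat\B;\,\theta)\ge 0$ for all $\theta>0$; an analogous Cherednik--transform argument would handle $\hat\HO$, and the Jack case follows by degeneration. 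One could equally try to realize the law of $\a\boxplus_\beta\b$ directly for general $\beta$ as the marginal of a manifestly non-negative process, or as an explicit Dixon--Anderson / Heckman--Opdam integral with non-negative integrand.

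The hard part, in every version of this plan, is the same. No manifestly positive combinatorial description of the Macdonald structure constants is known for general $(q,t)$, and iterating the Pieri rules produces coefficients carrying signs at intermediate steps, so non-negativity of the final $f_{\lambda,\mu}^\nu$ cannot be read off term by term. On the analytic side, the positive product--formula representations are available only at the degenerate levels and for restricted parameter ranges, so they would at best settle the Bessel, Heckman--Opdam, and Jack parts, not the full two--parameter Macdonald statement; moreover positivity at the degenerate levels does not lift back upward. I therefore expect the genuine two--parameter Macdonald positivity for all $0<q,t<1$ to be the crux, essentially as hard as the conjecture itself.
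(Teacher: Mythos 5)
This statement is the paper's Conjecture \ref{Conjecture_positivity} and is left \emph{open} there: the paper contains no proof, only the survey of known special cases --- $q=t$ (classical Littlewood--Richardson), $q=0$ (Ram, Schwer, Kapovich--Millson), $\theta=1/2,1,2$ (representation-theoretic interpretations), Stanley's conjecture for the Jack case, and R\"osler's partial results for the Bessel case --- which is precisely the set of anchors you assembled. Your normalization reduction is correct (the principal specialization $P_\kappa(1,t,\dots,t^{N-1};\,q,t)$ is indeed strictly positive on $0<q,t<1$, so $c_{\lambda,\mu}^{\nu}(\hat P;\,q,t)$ has the sign of the unnormalized structure constant), and your honest conclusion --- that two-parameter Macdonald positivity on the full square is the open crux, with the $\hat J$, $\hat \HO$, $\hat \B$ cases obtainable in principle by limit transitions whose distributional meaning the paper itself flags as delicate --- coincides exactly with the paper's own position, so there is nothing to prove here and no gap to report.
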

 In the $q=t$ case, $c_{\lambda,\mu}^{\nu}(\hat P;\,
 q,q)$ are (up to a normalization) conventional Littlewood--Richardson coefficients,
 which are known to be non-negative due to either representation--theoretic interpretations
 or combinatorial formulas, see e.g.\ \cite[Chapter I, Section 10]{M}. For $\theta=1/2,1,2$, the coefficients $c_{\lambda,\mu}^{\nu}(\hat J;\,
 \theta)$, $c_{\ell,r}^{s}(\hat \HO;\,
 \theta)$, $c_{\ell,r}^{s}(\hat \B;\,
 \theta)$ are again known to be non-negative due to representation--theoretic
 interpretations, cf.\ \cite[Chapter VII]{M}.
 When $q=0$, the non-negativity of the coefficients $c_{\lambda,\mu}^{\nu}(\hat P;\,
 q,t)$ are due to known combinatorial formulas, see \cite[Theorem 4.9]{Ram}, \cite[Theorem
1.3]{Sc}, \cite{KM} and references therein. The
 non-negativity of $c_{\lambda,\mu}^{\nu}(\hat J;\,
 \theta)$ would also follow from a (still open) conjecture of Stanley \cite[Conjecture 8.3]{Stanley}.
 We refer to \cite{Rosler} for progress concerning the non-negativity of $c_{\ell,r}^{s}(\hat \B;\,
 \theta)$.

\bigskip

Since our definitions of $\hat P$, $\hat J$, $\hat \HO$, $\hat B$ imply that
$$
 \sum_{\nu} c_{\lambda,\mu}^{\nu}(\hat P;\,
 q,t)= \sum_{\nu} c_{\lambda,\mu}^{\nu}(\hat J;\,
 \theta)= \int_{s} c_{\ell,r}^{s}(\hat \HO;\,
 \theta)ds= \int_{s} c_{\ell,r}^{s}(\hat \B;\,
 \theta)ds=1,
$$
Conjecture \ref{Conjecture_positivity} would then allow for these coefficients (with fixed $\lambda,\mu$ or $r,\ell$, and varying $\nu$ or $s$) can be treated as
\emph{probability measures}. Without Conjecture \ref{Conjecture_positivity} the
coefficients in \eqref{eq_LR_Mac}, \eqref{eq_LR_Jack} define \emph{signed} measures
of total mass $1$, while the coefficients in \eqref{eq_LR_HO}, \eqref{eq_LR_Bes}
have only distributional meaning.

The connection of these definitions to random matrix operations is explained in the
next two propositions. For $r=(r_1,\dots,r_N)$, let $\exp(r)$ denote the vector
$(\exp(r_1),\dots,\exp(r_N))$
\begin{proposition}\label{Prop_muliplication_HO}
 Choose $\theta=\frac{1}{2}$, $1$, or $2$, and let $\beta=2\theta$. Take $r=(r_1,\dots,r_N)$, $\ell=(\ell_1,\dots,\ell_N)$ and let
 $s=(s_1,\dots,s_N)$ be $c_{r,\ell}^s(\hat \HO;\, \theta)$--distributed random
 vector. Then in the notations of Section \ref{Section_intro}
\begin{equation}
 \exp(s) \stackrel{d}{=} \exp(r) \boxtimes_{\beta} \exp(\ell).
\end{equation}
\end{proposition}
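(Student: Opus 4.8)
The plan is to recognize that at the geometric values $\theta=\tfrac12,1,2$ the normalized Heckman--Opdam function $\hat\HO_r(\,\cdot\,;\theta)$ is the spherical function of the symmetric space $X=GL(N,\mathbb F)/U(N,\mathbb F)$, with $\mathbb F=\mathbb R,\mathbb C,\mathbb H$, whose restricted root system is of type $A_{N-1}$ with root multiplicity $2\theta=\beta$. Under Heckman--Opdam/Helgason theory this spherical function is given by a Harish--Chandra integral over the compact group $K=U(N,\mathbb F)$,
\[
 \hat\HO_r(y;\theta)=\int_{K}\exp\bigl(\langle r+\rho,\,H(k\,a_y)\rangle\bigr)\,dk,\qquad a_y=\exp(y)\in A,
\]
where $H$ is the Iwasawa projection and $\rho$ the half--sum of positive roots. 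First I would make this identification precise, matching the label $r$ with the spectral parameter, the argument $y$ with a point of the flat part $A$, and the normalizing evaluation $\hat\HO_r(0,-\theta,\dots,(1-N)\theta;\theta)$ in the denominator with the condition $\phi_r(e)=1$.

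The second step is to turn the random matrix product into a convolution diagonalized by this transform. Writing the invariant positive matrices with spectra $\exp(r)$ and $\exp(\ell)$ as $K$--orbits through the Cartan decomposition $G=KA^{+}K$, the eigenvalues of their product (equivalently, of the self--adjoint $A^{1/2}BA^{1/2}\in X$) are governed by the convolution of the associated $K$--bi--invariant orbital measures on $G$. The decisive input is the product formula for spherical functions on the Gelfand pair $(G,K)$,
\[
 \int_{K}\phi_r(g_1\,k\,g_2)\,dk=\phi_r(g_1)\,\phi_r(g_2),
\]
which, integrated against the two orbital measures and translated through Step~1, yields
\[
 \E\bigl[\hat\HO_s(y;\theta)\bigr]=\hat\HO_r(y;\theta)\,\hat\HO_\ell(y;\theta),
\]
where $s$ denotes the vector of logarithms of the eigenvalues of the product.

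Finally I would invoke the defining relation \eqref{eq_LR_HO}, namely $\int_s c_{r,\ell}^{s}(\hat\HO;\theta)\,\hat\HO_s(y;\theta)\,ds=\hat\HO_r(y;\theta)\,\hat\HO_\ell(y;\theta)$, to conclude that the law of $s$ arising from the matrix product and the measure $c_{r,\ell}^{s}(\hat\HO;\theta)$ have identical spherical (Cherednik) transforms. Injectivity of this transform on compactly supported measures --- one of the standard facts recalled before \eqref{eq_LR_cont_support} --- then forces the two to coincide, giving $\exp(s)\stackrel{d}{=}\exp(r)\boxtimes_\beta\exp(\ell)$.

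The main obstacle is Step~2: pinning down the exact dictionary --- including all normalizing constants and the factor of $2$ between the group radial coordinate in $G=KA^{+}K$ and the logarithms of the eigenvalues of $gg^{*}$ --- between the random matrix multiplication and the convolution of orbital measures, and checking that the spherical product formula really does produce the clean multiplicativity of the transform at the level of eigenvalues. One must be careful that $AB$ is not itself self--adjoint but only conjugate to $A^{1/2}BA^{1/2}$, so the $K$--averaging has to be reconciled with the Cartan and Iwasawa decompositions before the functional equation can be applied.
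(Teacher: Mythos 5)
Your Steps 1 and 3 are fine in isolation, but Step 2 contains a genuine gap: a silent swap of the label and variable slots of $\hat \HO$. The spherical product formula $\int_K \phi_\mu(g_1kg_2)\,dk=\phi_\mu(g_1)\phi_\mu(g_2)$, integrated against the two orbital measures, computes the expectation of the spherical function at a \emph{fixed spectral parameter} $\mu$ over the random \emph{radial} coordinate; in the paper's notation it yields $\E\bigl[\hat \HO_\mu(s;\,\theta)\bigr]=\hat \HO_\mu(r;\,\theta)\,\hat \HO_\mu(\ell;\,\theta)$, with $s,r,\ell$ in the variable slot. What you wrote instead, $\E\bigl[\hat \HO_s(y;\,\theta)\bigr]=\hat \HO_r(y;\,\theta)\,\hat \HO_\ell(y;\,\theta)$, puts $s,r,\ell$ in the label slot --- which is the form you need to compare with the defining expansion \eqref{eq_LR_HO}. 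The two statements are interchangeable only if $\hat \HO$ were symmetric under exchanging its label with its variable, and it is not: unlike the Bessel functions, which enjoy exactly this self-duality \eqref{eq_Bessel_symmetry} (which is why the companion Proposition for $\boxplus_\beta$ is straightforward), the bispectral dual of $\hat \HO$ in the label is the \emph{discrete} family of normalized Jack polynomials, as recorded in \eqref{eq_HO_symmetry}. Consequently the product formula alone does not identify the eigenvalue law of the matrix product with the structure measure $c_{r,\ell}^{s}(\hat \HO;\,\theta)$: the coincidence of the radial convolution (your Step 2) with the label-slot decomposition \eqref{eq_LR_HO} is essentially the content of the proposition, not a Gelfand-pair generality, and your Step 3 injectivity argument then has nothing to bite on, since the transform you can actually compute from the product formula pairs the law of $s$ with $\hat \HO$ as a function of its \emph{variable}, not its label.

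This slot mismatch is precisely what the paper's proof handles. It evaluates \eqref{eq_LR_HO} at the special grid of variables $y=(-\mu_1-(N-1)\theta,\dots,-\mu_N)$, where \eqref{eq_HO_symmetry} converts the $\hat \HO$'s of the labels $s,r,\ell$ into normalized Jack polynomials at $\exp(-s)$, $\exp(-r)$, $\exp(-\ell)$; after the inversion property (the Jack limit of \eqref{eq_Mac_minus_1}) this gives $\E\,\hat J_\lambda(\exp(s);\,\theta)=\hat J_\lambda(\exp(r);\,\theta)\,\hat J_\lambda(\exp(\ell);\,\theta)$ for all integral $\lambda$. The same identities hold with $\exp(s)$ replaced by $\exp(r)\boxtimes_\beta\exp(\ell)$ by the classical multiplicativity of normalized zonal/Jack polynomials at $\theta=\tfrac12,1,2$ (\cite[Chapter VII]{M}, \cite[Section 13.4.3]{For}) --- this known identity is the rigorous incarnation of your product-formula heuristic, with the Jack label playing the role of the spectral parameter. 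Since the $\hat J_\lambda$ form a basis of the symmetric Laurent polynomials and all distributions involved are compactly supported by \eqref{eq_LR_cont_support}, these moments determine the law uniquely. You could repair your argument by inserting exactly this duality step --- test your Step 2 identity on the discrete spectral parameters where spherical functions restrict to normalized Jack polynomials, then use \eqref{eq_HO_symmetry} to show $c_{r,\ell}^{s}(\hat \HO;\,\theta)$ satisfies the same Jack-moment identities --- but at that point you have reproduced the paper's proof, with the Cherednik-transform injectivity replaced by the more elementary basis property of Jack polynomials.
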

\begin{proof}
 Combining \eqref{eq_LR_HO} with \eqref{eq_HO_symmetry} we conclude that for $c_{r,\ell}^s(\hat \HO;\, \theta)$--distributed random
 vector and each integral vector $\lambda=(\lambda_1\ge\lambda_2\ge\dots\ge\lambda_N)$, we have
 \begin{multline*}
  \E \hat J_\lambda(\exp(-s_1),\dots,\exp(-s_N);\, \theta)\\=
  \hat J_\lambda(\exp(-r_1),\dots,\exp(-r_N);\, \theta) \hat J_\lambda(\exp(-\ell_1),\dots,\exp(-\ell_N);\, \theta)
 \end{multline*}
 Taking into account the Jack limit of \eqref{eq_Mac_minus_1}, we can equivalently
 write
 \begin{equation}
 \label{eq_Expecation_Jack}
  \E \hat J_\lambda(\exp(s_1),\dots,\exp(s_N);\, \theta)=
  \hat J_\lambda(\exp(r_1),\dots,\exp(r_N);\, \theta) \hat J_\lambda(\exp(\ell_1),\dots,\exp(\ell_N);\,
  \theta).
 \end{equation}
 Note that $\hat J_\lambda$ form a basis in symmetric Laurent polynomials, and
 therefore, the expectations of the form \eqref{eq_Expecation_Jack} uniquely define
 the distribution of $\exp(s_1),\dots,\exp(s_N)$. On the other hand, the identity
 \eqref{eq_Expecation_Jack} is well-known to hold with $\exp(s)$ replaced by
 $\exp(r) \boxtimes_{\beta} \exp(\ell)$, see \cite[Chapter VII]{M}, \cite[Section
 13.4.3]{For} and references therein.
\end{proof}

\begin{proposition}\label{Prop_addition_B}
 Choose $\theta=\frac{1}{2}$, $1$, or $2$, and let $\beta=2\theta$. Take $r=(r_1,\dots,r_N)$, $\ell=(\ell_1,\dots,\ell_N)$ and let
 $s=(s_1,\dots,s_N)$ be $c_{r,\ell}^s(\hat \B;\, \theta)$--distributed random
 vector. Then in the notations of Section \ref{Section_intro}
\begin{equation}
\label{eq_x6}
 s \stackrel{d}{=} r \boxplus_{\beta} \ell.
\end{equation}
\end{proposition}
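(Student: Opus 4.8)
The plan is to mirror the proof of Proposition~\ref{Prop_muliplication_HO}, replacing the Heckman--Opdam functions by the multivariate Bessel functions and the multiplicative convolution by the additive one. First I would start from the decomposition \eqref{eq_LR_Bes}. At $\theta=\tfrac12,1,2$ the coefficients $c_{r,\ell}^s(\hat\B;\,\theta)$ are genuine non-negative densities of total mass one, so the decomposition may be read as the expectation identity
\[
 \E_s\, \hat\B_s(z_1,\dots,z_N;\,\theta) = \hat\B_r(z_1,\dots,z_N;\,\theta)\,\hat\B_\ell(z_1,\dots,z_N;\,\theta), \qquad (z_1,\dots,z_N)\in\rr^N,
\]
where $s$ is $c_{r,\ell}^s(\hat\B;\,\theta)$--distributed.

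Next I would apply the label--variable symmetry \eqref{eq_Bessel_symmetry}, which in the Bessel case is self--dual, to interchange arguments and labels in every factor. Writing $z$ for the common label, this turns the previous display into
\[
 \E_s\, \hat\B_z(s_1,\dots,s_N;\,\theta) = \hat\B_z(r_1,\dots,r_N;\,\theta)\,\hat\B_z(\ell_1,\dots,\ell_N;\,\theta),\qquad (z_1,\dots,z_N)\in\rr^N.
\]
The point of this rewriting is that the family $\{\hat\B_z(\cdot\,;\,\theta)\}_{z\in\rr^N}$ plays the role that the linear basis $\{\hat J_\lambda\}$ played in Proposition~\ref{Prop_muliplication_HO}: knowing the quantities $\E\,\hat\B_z(s;\,\theta)$ for all $z$ is exactly knowing the Bessel (Dunkl) transform of the law of $s$, and this transform is injective on measures supported in the compact region \eqref{eq_LR_cont_support}. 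Hence the displayed identity pins down the law of $s$ uniquely.

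Finally I would invoke the defining property of $\boxplus_\beta$. By construction (Section~\ref{Section_operations}), the multivariate Bessel function is the spherical function for the addition of invariant self--adjoint matrices --- the $\beta$--analogue of the Harish-Chandra--Itzykson--Zuber integral --- so the eigenvalues of $A_N+B_N$ satisfy the same multiplicativity
\[
 \E\, \hat\B_z(r\boxplus_\beta\ell;\,\theta) = \hat\B_z(r;\,\theta)\,\hat\B_z(\ell;\,\theta),\qquad (z_1,\dots,z_N)\in\rr^N.
\]
Comparing the two identities and using injectivity yields $s \stackrel{d}{=} r\boxplus_\beta\ell$.

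The main obstacle is the uniqueness step. In the Jack/Heckman--Opdam setting of Proposition~\ref{Prop_muliplication_HO} the relevant functions form an honest linear basis, so uniqueness is purely algebraic; here one must instead rely on the analytic injectivity of the Dunkl transform together with the compact--support statement \eqref{eq_LR_cont_support}, and on the fact that \eqref{eq_LR_Bes} is a priori only a distributional identity. At $\theta=\tfrac12,1,2$ the positivity predicted by Conjecture~\ref{Conjecture_positivity} is known, so $c_{r,\ell}^s(\hat\B;\,\theta)$ is a bona fide probability measure and the distributional pairing causes no trouble; the essential work is to make transform--injectivity precise on this class of compactly supported measures.
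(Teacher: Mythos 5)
Your proposal is correct and follows essentially the same route as the paper: at $\theta=\tfrac12,1,2$ the normalized Bessel function is identified with the Laplace transform of the invariant matrix orbit (the paper gets this from the combinatorial formula \eqref{eq_Bessel_combinatorial} together with the corners--process realization, you via the spherical/HCIZ interpretation), and then multiplicativity of Laplace transforms under addition of independent matrices plus injectivity of the transform on compactly supported laws gives \eqref{eq_x6}. The only difference is cosmetic --- your intermediate application of the symmetry \eqref{eq_Bessel_symmetry} is redundant since $\hat\B$ is self--dual, and you make explicit the injectivity step that the paper leaves implicit.
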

\begin{proof}
  The formula \eqref{eq_Bessel_combinatorial} identifies the Bessel functions with
  Laplace transforms of the $\beta$--corners processes. At $\theta=1/2,1,2$, this
  implies an identification with Laplace transforms of real/complex/quaternion
  random matrices, see the discussion after Definition \ref{def_betacorner}. Laplace transform of the sum of independent random variables
  (matrices) is a product of Laplace transforms, hence the definition of $r \boxplus_{\beta} \ell$ coincides
  with that of $c_{r,\ell}^s(\hat \B;\,
  \theta)$--distributed $s$.
\end{proof}

Propositions \ref{Prop_muliplication_HO}, \ref{Prop_addition_B} motivate
extrapolation of matrix operations to general $\beta>0$.

\begin{definition}
\label{Def_multiplication}
  Choose $\theta>0$, and let $\beta=2\theta$. Take $\a=(a_1,\dots,a_N)$,
  $\b=(b_1,\dots,b_N)$ and assume $\a=\exp(r)$, $\b=\exp(\ell)$. Let
 $s=(s_1,\dots,s_N)$ be $c_{r,\ell}^s(\hat \HO;\, \theta)$--distributed random
 vector. Then in the notations of Section \ref{Section_intro}
\begin{equation}
\label{eq_x7}
  \a \boxtimes_{\beta} \b:=\exp(s).
\end{equation}
\end{definition}
\begin{definition}\label{Def_addition}
 Choose $\theta>0,$ and let $\beta=2\theta$. Take $\a=(a_1,\dots,a_N)$, $\b=(b_1,\dots,b_N)$ and let
 $s=(s_1,\dots,s_N)$ be $c_{\a,\b}^s(\hat \B;\, \theta)$--distributed random
 vector. Then in the notations of Section \ref{Section_intro}
\begin{equation}
\label{eq_x8}
 \a \boxplus_{\beta} \b:=s.
\end{equation}
\end{definition}
\begin{definition}\label{Def_projections}
 Choose $\theta>0,$ and let $\beta=2\theta$. Take $\a=(a_1,\dots,a_N)$, $k\le N$ and
 let $s=(s_1,\dots,s_k)$ be the $k$th row of $\beta$--corners process with top row $\a$, i.e.\
 the particles $x_1^k,x_2^k,\dots,x_k^k$ in the notations of Definition \ref{def_betacorner}.  Then
\begin{equation}
\label{eq_x9} \pi^{N\to k}_\beta(\a):=s.
\end{equation}
\end{definition}
Comparing with \eqref{eq_Bessel_combinatorial}, we see the following property (which
can be taken as an equivalent definition of $\pi^{N\to k}_\beta(\a)$):
\begin{equation}
\label{eq_Projection_Bessel}
 \E \B_{\pi^{N\to k}_\beta(\a)} (z_1,\dots,z_k;\, \beta/2)= \B_{\a}(z_1,\dots,z_k,
 \underbrace{0,\dots,0}_{N-k}\,).
\end{equation}

\section{Proofs}

\label{Section_proofs}

\subsection{The expectation identities} \label{Section_expectations} The proofs of Theorems
\ref{Theorem_independence}, \ref{Theorem_crystal} are based on the evaluations of
the expectations of Jack polynomials summarized in the next three propositions.

For a finite integer vector $\lambda=(\lambda_1\ge\lambda_2\ge \dots\ge 0)$, we
define
 $$
 (t)_{\lambda;\theta}=\prod_{\begin{smallmatrix}(i,j)\in \mathbb Z_{>0} \times \mathbb Z_{>0}\\ j\le
 \lambda_i \end{smallmatrix}} \bigl(t+(j-1)-\theta(i-1) \bigr).
$$
 We need to introduce yet another normalization of Jack polynomials.
The dual Jack polynomials $ J^{\mathrm{dual}}_\lambda$ differ from $J_\lambda$ by
multiplication by explicit constants independent of $N$ and $x_1,\dots,x_N$ (see
\cite[Chapter VI, Section 10]{M}) in such a way that makes the following identity true:
$$
 \sum_{\lambda_1\ge\dots\ge\lambda_N\ge 0} J_\lambda(x_1,\dots,x_N;\, \theta)
 J^{\mathrm{dual}}_\lambda(y_1,\dots,y_N;\, \theta)=\prod_{i,j=1}^N (1-x_i y_j)^{-\theta}.
$$
We also need corresponding Jack--Littlewood--Richardson coefficients
$c^{\lambda}_{\mu,\nu}(J^{\mathrm {dual}},\, \theta)$, defined by
$$
 J^{\mathrm {dual}}_\mu(x_1,\dots,x_N;\, \theta) \cdot J^{\mathrm {dual}}_\nu(x_1,\dots,x_N;\,\theta)=\sum_{\lambda}
  c^{\lambda}_{\mu,\nu}(J^{\mathrm {dual}};\,\theta)\cdot J^{\mathrm {dual}}_\lambda(x_1,\dots,x_N;\, \theta).
$$

\begin{proposition} \label{Proposition_Jack_observable_product} Fix positive vectors of eigenvalues
$\a$ and $\b$, and let  $\a \boxtimes_\beta \b$ denote the corresponding
$N$--dimensional random vector in the notations of Section \ref{Section_intro}.
 For each $\lambda=\lambda_1\ge\lambda_2\ge\dots\ge\lambda_N\ge 0$ we have
\begin{equation} \label{eq_Jack_observable_product}
 \E J_\lambda( \a \boxtimes_\beta \b;\, \beta/2)=
 \frac{J_\lambda( \a ;\, \beta/2) \cdot J_\lambda( \b ;\,
 \beta/2)}{J_\lambda(1,\dots,1;\, \beta/2)}.
\end{equation}
\end{proposition}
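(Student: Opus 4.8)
The plan is to reduce the statement to the Jack--polynomial identity already extracted inside the proof of Proposition \ref{Prop_muliplication_HO}, and then pass from the normalized functions $\hat J_\lambda$ to the unnormalized $J_\lambda$. The crucial observation is that, although Proposition \ref{Prop_muliplication_HO} is stated only for $\theta=\tfrac12,1,2$ (because its \emph{conclusion} is phrased through genuine matrix multiplication), the algebraic chain producing \eqref{eq_Expecation_Jack} uses \emph{only} the product expansion \eqref{eq_LR_HO}, the label--variable symmetry \eqref{eq_HO_symmetry}, and the Jack limit of the inversion relation \eqref{eq_Mac_minus_1}. None of these three ingredients restricts $\theta$, so the same equalities hold for every $\theta>0$, which is exactly the regime needed here via Definition \ref{Def_multiplication}.

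Concretely, I would write $\a=\exp(r)$, $\b=\exp(\ell)$ and let $s$ be $c_{r,\ell}^s(\hat\HO;\,\theta)$--distributed, so that $\a\boxtimes_\beta\b=\exp(s)$ by \eqref{eq_x7}. Evaluating \eqref{eq_LR_HO} at the distinguished label--points $(-\mu_1-(N-1)\theta,\dots,-\mu_N)$ and invoking \eqref{eq_HO_symmetry} turns each $\hat\HO$ factor into a Jack polynomial at $\exp(-r)$, $\exp(-\ell)$, or $\exp(-s)$; after absorbing the sign inversion through the Jack form of \eqref{eq_Mac_minus_1} this yields, now for arbitrary $\theta>0$ and every signature $\lambda$,
\[
\E\,\hat J_\lambda(\exp(s);\,\theta)=\hat J_\lambda(\exp(r);\,\theta)\,\hat J_\lambda(\exp(\ell);\,\theta).
\]
It then remains to unfold the normalization. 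Substituting $\exp(s)=\a\boxtimes_\beta\b$, $\exp(r)=\a$, $\exp(\ell)=\b$ and using $\hat J_\lambda(\cdot;\,\theta)=J_\lambda(\cdot;\,\theta)/J_\lambda(1,\dots,1;\,\theta)$ gives
\[
\frac{\E\, J_\lambda(\a\boxtimes_\beta\b;\,\theta)}{J_\lambda(1,\dots,1;\,\theta)}=\frac{J_\lambda(\a;\,\theta)}{J_\lambda(1,\dots,1;\,\theta)}\cdot\frac{J_\lambda(\b;\,\theta)}{J_\lambda(1,\dots,1;\,\theta)};
\]
clearing one factor $J_\lambda(1,\dots,1;\,\theta)$ and setting $\theta=\beta/2$ produces precisely \eqref{eq_Jack_observable_product}.

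The only genuinely delicate point---and the step I would scrutinize most carefully---is the meaning of the expectation $\E$ when $\theta\notin\{\tfrac12,1,2\}$: outside these values the coefficients $c_{r,\ell}^s(\hat\HO;\,\theta)$ are guaranteed only to define a compactly supported distribution (a signed measure of total mass one, should Conjecture \ref{Conjecture_positivity} fail), so $\a\boxtimes_\beta\b$ need not be an honest random vector. I would therefore read $\E[\,\cdot\,]$ throughout as the pairing of the test function $J_\lambda(\exp(\cdot);\,\theta)$ against this distribution; since $J_\lambda$ is a genuine symmetric polynomial, the pairing is well defined, the supports in \eqref{eq_LR_cont_support} are compact, and the identities above are identities of such pairings. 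With this interpretation the argument is purely formal and requires no positivity, so the main obstacle is conceptual (fixing the correct distributional reading of $\E$) rather than computational.
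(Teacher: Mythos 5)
Your proposal is correct and is essentially the paper's own proof: the paper disposes of Proposition \ref{Proposition_Jack_observable_product} in one line by noting it is equivalent to \eqref{eq_Expecation_Jack}, whose derivation from \eqref{eq_LR_HO}, \eqref{eq_HO_symmetry}, and the Jack limit of \eqref{eq_Mac_minus_1} never used $\theta=\tfrac12,1,2$, which is exactly the observation you make and then unfold through the normalization $\hat J_\lambda = J_\lambda/J_\lambda(1,\dots,1;\,\theta)$. Your distributional reading of $\E$ in the absence of Conjecture \ref{Conjecture_positivity} is likewise consistent with the paper's footnote on the status of $\boxtimes_\beta$ for general $\beta>0$.
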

\begin{proof}
 This is equivalent to \eqref{eq_Expecation_Jack}, whose proof, in fact, did not use
 $\theta=1/2,1,2$.
\end{proof}

\begin{proposition} \label{Proposition_Jack_observable_projection} Fix $k\le N$ and $\a$, and let
$\pi^{\beta}_{N\to k}(\a)$ denote the corresponding $k$--dimensional random vector
 in the notations of Section
\ref{Section_intro}.
 For each $\lambda=\lambda_1\ge\lambda_2\ge\dots\ge\lambda_k\ge 0$ we have
\begin{equation}\label{eq_Jack_observable_projection}
 \E  J_\lambda( \pi^{\beta}_{N\to k}(\a);\, \beta/2)= J_\lambda(\a;\ \beta/2)\cdot \frac{
  (k\beta/2)_{\lambda;\beta/2}}{(N\beta/2)_{\lambda;\beta/2}},
\end{equation}
 where in the right--hand side we view $\lambda$ as a signature of rank $N$ by
 adding $N-k$ zeros.
\end{proposition}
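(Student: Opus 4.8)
The plan is to use the Markov structure of the $\beta$--corners process and reduce the claim to a single elementary projection, which is then iterated. Set $\theta=\beta/2$. The density \eqref{eq_beta_corners_def} factors over consecutive rows, so the rows $x^N=\a,x^{N-1},\dots,x^k$ form a Markov chain; conditionally on $x^m=(a_1<\dots<a_m)$, the row $x^{m-1}=(x_1<\dots<x_{m-1})$ has the Dixon--Anderson density proportional to $\prod_{1\le i<j\le m-1}(x_j-x_i)^{2-2\theta}\prod_{a=1}^{m-1}\prod_{b=1}^{m}|x_a-a_b|^{\theta-1}$ on the interlacing region $a_i\le x_i\le a_{i+1}$. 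Since $J_\lambda(x_1,\dots,x_{m-1},0;\theta)=J_\lambda(x_1,\dots,x_{m-1};\theta)$ whenever $\lambda$ has at most $m-1$ parts, I may pad $\lambda$ with zeros and treat $J_\lambda$ in any number of variables $\ge\ell(\lambda)$. The whole statement then follows by the tower property from the single--step eigenrelation
\begin{equation*}
 \E\bigl[\,J_\lambda(x^{m-1};\theta)\mid x^m=\a\,\bigr]=J_\lambda(\a;\theta)\cdot\frac{((m-1)\theta)_{\lambda;\theta}}{(m\theta)_{\lambda;\theta}},\qquad k<m\le N,
\end{equation*}
because iterating it telescopes the product $\prod_{m=k+1}^{N}\frac{((m-1)\theta)_{\lambda;\theta}}{(m\theta)_{\lambda;\theta}}=\frac{(k\theta)_{\lambda;\theta}}{(N\theta)_{\lambda;\theta}}$.

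Proving this single--step identity is the main obstacle: it is exactly a Jack--polynomial weighting of the Dixon--Anderson integral, the unweighted $\lambda=\varnothing$ case being the normalization recorded in the Remark after Definition \ref{def_betacorner} (cf.\ \cite[Chapter 4]{For}). The scaling covariance $\a\mapsto c\a$, $x\mapsto cx$ of the Dixon--Anderson kernel shows that the conditional expectation preserves the degree $|\lambda|$ and hence maps the finite--dimensional space of degree--$|\lambda|$ symmetric polynomials to itself; the real work is to show that it is \emph{diagonalized} by the Jack basis and to identify the eigenvalue. I would either invoke the known Jack/Selberg--type evaluation of this integral, or prove diagonality intrinsically, for instance by transporting the action of the Jack--Sekiguchi operators (for which the $J_\lambda$ are the eigenfunctions) through the Dixon--Anderson kernel. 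Once diagonality is known, the eigenvalue is read off by evaluating at $\a=(1,\dots,1)$: the principal specialization gives $J_\lambda(1^{m-1};\theta)/J_\lambda(1^{m};\theta)=((m-1)\theta)_{\lambda;\theta}/(m\theta)_{\lambda;\theta}$, which is the claimed factor.

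As a consistency check, and a possible second route, the answer matches Proposition \ref{Proposition_Jack_observable_product} for the degenerate spectrum $\b$ equal to $k$ ones followed by $N-k$ zeros: writing the principal specialization as $J_\lambda(1^k;\theta)/J_\lambda(1^N;\theta)=(k\theta)_{\lambda;\theta}/(N\theta)_{\lambda;\theta}$, the target reads $J_\lambda(\a;\theta)\,J_\lambda(\b;\theta)/J_\lambda(1^N;\theta)$, which is precisely the multiplicative formula \eqref{eq_Jack_observable_product} with this $\b$ --- reflecting that cutting out a $k\times k$ corner is multiplication by a rank--$k$ projector. Turning this into a full proof would require passing to the limit along $\b=(1,\dots,1,\delta,\dots,\delta)$ as $\delta\to0^+$ in Proposition \ref{Proposition_Jack_observable_product} and identifying $\lim_{\delta\to0}\a\boxtimes_\beta\b$ with $\pi^{N\to k}_\beta(\a)$ augmented by $N-k$ zeros; since this identification at general $\beta$ has to reconcile the Heckman--Opdam and Bessel/corners characterizations, I expect the direct Dixon--Anderson route above to be the cleaner one.
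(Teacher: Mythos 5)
Your overall architecture --- realize $\pi^{N\to k}_\beta(\a)$ as a Markov chain in the level index, prove a one-step eigenrelation for the Jack basis, and telescope the eigenvalues via ratios of principal specializations --- is genuinely different from the paper's proof, which never works with the corners density at all: the paper plugs the binomial series \eqref{eq_Bessel_binomial} into both sides of the Bessel characterization \eqref{eq_Projection_Bessel}, uses the stability of Jack polynomials under appending zero variables, and compares coefficients of $J^{\mathrm{dual}}_\lambda$. Your reduction and the telescoping $\prod_{m=k+1}^{N}\frac{((m-1)\theta)_{\lambda;\theta}}{(m\theta)_{\lambda;\theta}}=\frac{(k\theta)_{\lambda;\theta}}{(N\theta)_{\lambda;\theta}}$ are fine, but the proposal has one concrete error and one genuine gap.

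The error: your one-step kernel is wrong. The conditional density of row $m-1$ given row $m$, after marginalizing the rows below, carries the Vandermonde of the \emph{lower} row to the power $1$, not $2-2\theta$. Integrating out rows $1,\dots,m-2$ produces, by the Dixon--Anderson formula \eqref{eq_Dixon_formula} (equivalently, by \eqref{eq_normalization}), an extra factor $\prod_{i<j}(x_j^{m-1}-x_i^{m-1})^{\beta-1}$, which combines with the intrinsic $(x_j^{m-1}-x_i^{m-1})^{2-\beta}$ from \eqref{eq_beta_corners_def} to give exponent $1$; this is exactly the paper's \eqref{eq_cond_denstity}, and it is the genuine Dixon--Anderson density. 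With the exponent $2-2\theta$ as you stated it, the eigenrelation is false: at $\beta=2$ your density is uniform on the interlacing polytope, and already for $m=3$, $\lambda=(1)$, $\a=(0,1,5)$ one gets $\E e_1 = \tfrac12 + 3 = \tfrac72$, whereas the claimed value is $\tfrac23\, e_1(\a)=4$ (which the correct kernel, proportional to $x_2-x_1$, does produce).

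The gap: even with the kernel corrected, the crux --- that the Dixon--Anderson kernel is diagonalized by the Jack basis --- is deferred rather than proved. Your fallback argument (scaling covariance gives degree preservation, then read off the eigenvalue at $\a=1^m$) does not deliver it: homogeneity says nothing about diagonality in a particular basis, and by itself does not even give polynomiality of the conditional expectation in $\a$; establishing diagonality is precisely the nontrivial content of the statement. The needed one-step identity is, however, a known theorem --- the Okounkov--Olshanski integral representation of Jack polynomials \cite{OO}, used in exactly this one-step form in \cite{GS} --- so invoking it with the corrected kernel would close your argument, and the evaluation at $\a=(1,\dots,1)$, where the conditional law degenerates to a point mass, then legitimately identifies the eigenvalue as $((m-1)\theta)_{\lambda;\theta}/(m\theta)_{\lambda;\theta}$. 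Note that this external input is of comparable weight to what the paper imports (the combinatorial formula \eqref{eq_Bessel_combinatorial} and the binomial expansion \eqref{eq_Bessel_binomial} from \cite{Ok_Olsh_shifted_Jack}), so neither route is more elementary; yours has the advantage of staying inside the corners-process picture, the paper's of requiring no integral evaluation beyond a coefficient comparison.
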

\begin{proof}
 We use the series expansion of the Bessel functions, see \cite[Section 4]{Ok_Olsh_shifted_Jack}:
\begin{multline}
\label{eq_Bessel_binomial}
 \B_{a_1,\dots,a_N}(x_1,\dots,x_N;\, \beta)=\sum_{\lambda=(\lambda_1\ge\dots\ge\lambda_N\ge 0)}
 \frac{J_\lambda(x_1,\dots,x_N;\, \beta/2) J^{\mathrm{dual}}_\lambda(a_1,\dots,a_N;\,
 \beta/2)}{(N\beta/2)_{\lambda; \beta/2}}.
\end{multline}
 We plug \eqref{eq_Bessel_binomial} into both sides of \eqref{eq_Projection_Bessel} and use the
 stability property of Jack polynomials valid for all $\lambda=(\lambda_1\ge\dots\ge\lambda_N\ge
 0)$:
 $$
  J_{\lambda}(z_1,\dots,z_k, \underbrace{0,\dots,0}_{N-k}\,;\theta)=\begin{cases}
  J_{\lambda}(z_1,\dots,z_k\,;\theta),&
  \lambda_{k+1}=\dots=\lambda_{N}=0,\\
  0,&\text{otherwise,}
  \end{cases}
 $$
 where in the right--hand side we treat $\lambda$ as a signature of rank $k$ by
 removing $N-k$ zero coordinates. We get
\begin{multline} \label{eq_x4}
  \E
  \sum_{\lambda=(\lambda_1\ge\dots\ge\lambda_k\ge 0)}  \frac{J_\lambda(\pi^{N\to k}_\beta(\a);\, \beta/2)
  J^{\mathrm{dual}}_\lambda(z_1,\dots,z_k;\,
 \beta/2)}{(k\beta/2)_{\lambda; \beta/2}},
\\= \sum_{\lambda=(\lambda_1\ge\dots\ge\lambda_k\ge 0)}
 \frac{J_\lambda(\a;\, \beta/2) J^{\mathrm{dual}}_\lambda(z_1,\dots,z_k;\,
 \beta/2)}{(N\beta/2)_{\lambda; \beta/2}},
\end{multline}
Comparing the coefficient of $J^{\mathrm{dual}}_\lambda(z_1,\dots,z_k;\, \beta/2)$
in both sides of \eqref{eq_x4} we arrive at the desired statement.
\end{proof}

\begin{proposition} \label{Proposition_Jack_observable_sum} Fix $\a$ and $\b$,
and let  $\a \boxplus_\beta \b$ denote the corresponding $N$--dimensional random
vectors (in $k$, $N$, $N$ dimensional spaces, respectively) in the notations of
Section \ref{Section_intro}.
 For each $\lambda=\lambda_1\ge\lambda_2\ge\dots\ge\lambda_N\ge 0$ we have
\begin{equation}\label{eq_Jack_observable_sum}
 \E  J_\lambda( \a \boxplus_\beta \b;\, \beta/2)= (N\beta/2)_{\lambda;\beta/2} \sum_{\nu,\mu}
  c^{\lambda}_{\mu,\nu}(J^{\mathrm {dual}};\, \beta/2)\cdot
  \frac{J_\mu(\a;\ \beta/2)}{(N\beta/2)_{\mu;\beta/2}} \cdot   \frac{J_\nu(\b;\
  \beta/2)}{(N\beta/2)_{\nu;\beta/2}}.
\end{equation}
\end{proposition}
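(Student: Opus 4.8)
The plan is to run the same argument as in the proof of Proposition~\ref{Proposition_Jack_observable_projection}: substitute the Bessel series \eqref{eq_Bessel_binomial} into the relation defining $\a\boxplus_\beta\b$ and extract the coefficient of a fixed dual Jack polynomial. By Definition~\ref{Def_addition} together with \eqref{eq_LR_Bes}, the vector $s=\a\boxplus_\beta\b$ is $c^s_{\a,\b}(\hat\B;\,\beta/2)$--distributed, so that for every $z=(z_1,\dots,z_N)$ one has
\begin{equation*}
 \E\,\hat\B_s(z;\,\beta/2)=\hat\B_\a(z;\,\beta/2)\,\hat\B_\b(z;\,\beta/2).
\end{equation*}
Setting all arguments to $0$ in \eqref{eq_Bessel_binomial} leaves only the $\lambda=0$ summand and gives $\B_r(0,\dots,0;\,\beta)=1$; hence $\hat\B=\B$ here and I may drop the hats throughout.

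Next I would expand both sides using \eqref{eq_Bessel_binomial} read in its label--oriented form, namely $\B_s(z)=\sum_\lambda (N\beta/2)_{\lambda;\,\beta/2}^{-1} J_\lambda(s;\,\beta/2)\,J^{\mathrm{dual}}_\lambda(z;\,\beta/2)$, which is legitimate by the label--argument symmetry \eqref{eq_Bessel_symmetry} (the same device that turns the naive expansion into \eqref{eq_x4} in the previous proof). This is the crucial move: it places the random $s$ (and the deterministic $\a,\b$) in the \emph{ordinary} Jack slot, matching the statement. Interchanging $\E$ with the sum, the left--hand side becomes $\sum_\lambda (N\beta/2)_{\lambda;\,\beta/2}^{-1}\,\E J_\lambda(s;\,\beta/2)\,J^{\mathrm{dual}}_\lambda(z;\,\beta/2)$, while the right--hand side becomes the product
\begin{equation*}
 \Bigg(\sum_\mu \frac{J_\mu(\a;\,\beta/2)\,J^{\mathrm{dual}}_\mu(z;\,\beta/2)}{(N\beta/2)_{\mu;\,\beta/2}}\Bigg)\Bigg(\sum_\nu \frac{J_\nu(\b;\,\beta/2)\,J^{\mathrm{dual}}_\nu(z;\,\beta/2)}{(N\beta/2)_{\nu;\,\beta/2}}\Bigg).
\end{equation*}

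Finally I would collapse the product $J^{\mathrm{dual}}_\mu(z)\,J^{\mathrm{dual}}_\nu(z)$ using the dual Jack--Littlewood--Richardson coefficients $c^\lambda_{\mu,\nu}(J^{\mathrm{dual}};\,\beta/2)$ defined just above the proposition, and then compare the coefficient of the basis element $J^{\mathrm{dual}}_\lambda(z;\,\beta/2)$ on the two sides; this yields \eqref{eq_Jack_observable_sum} at once. The work here is conceptually routine, and the only real issue is one of rigor: justifying the interchange of $\E$ (an integration against the possibly merely distributional density $c^s_{\a,\b}$) with the infinite Bessel series, the rearrangement of the resulting double series on the right, and term--by--term extraction in the basis $\{J^{\mathrm{dual}}_\lambda\}$. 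I expect this to be the main obstacle, and I would dispose of it exactly as in Proposition~\ref{Proposition_Jack_observable_projection}: both expansions are graded by $|\lambda|$, the support constraint \eqref{eq_LR_cont_support} keeps $s$ in a compact set so that $\E J_\lambda(s;\,\beta/2)$ is finite, and within each fixed homogeneous degree only finitely many $\lambda$ contribute, so every manipulation is valid degree by degree.
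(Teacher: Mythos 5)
Your proposal is correct and takes essentially the same route as the paper: the paper's proof likewise rewrites the Bessel Littlewood--Richardson relation as $\E\, \B_{\a\boxplus_\beta\b}(z;\,\beta/2)=\B_{\a}(z;\,\beta/2)\,\B_{\b}(z;\,\beta/2)$, plugs \eqref{eq_Bessel_binomial} into both sides (implicitly in the label--oriented form, justified exactly as you say by \eqref{eq_Bessel_symmetry} and $\B_r(0,\dots,0)=1$), collapses the product of dual Jacks via $c^{\lambda}_{\mu,\nu}(J^{\mathrm{dual}};\,\beta/2)$, and compares coefficients of $J^{\mathrm{dual}}_\lambda(z;\,\beta/2)$. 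Your additional care about interchanging $\E$ with the series and working degree by degree merely spells out details the paper leaves implicit.
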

\begin{remark} \label{Remark_JLR}
The sum in \eqref{eq_Jack_observable_sum} is finite, since
$c^{\lambda}_{\mu,\nu}(J^{\mathrm {dual}};\, \beta/2)$ vanishes unless $0\le \mu_i
\le \lambda_i$, $0\le \nu_i \le \lambda_i$, $\sum_{i=1}^N
(\mu_i+\nu_i-\lambda_i)=0$.
\end{remark}
\begin{proof}[Proof of Proposition \ref{Proposition_Jack_observable_sum}]
 The proof is similar to that of Proposition
 \ref{Proposition_Jack_observable_projection}. We rewrite \eqref{eq_LR_HO} as
 $$
  \E \B_{\a \boxplus_\beta \b}(z_1,\dots,z_N;\, \beta/2)=\B_{\a}(z_1,\dots,z_N;\, \beta/2)\cdot \B_{\b }(z_1,\dots,z_N;\,
  \beta/2),
 $$
 then plug \eqref{eq_Bessel_binomial} into both sides, and compare the coefficients
 of $J^{\mathrm{dual}}_\lambda(z_1,\dots,z_N;\, \beta/2)$.
\end{proof}

\subsection{Proof of Theorem \ref{Theorem_independence}}

\label{Section_Proof_of_Th_independence}

We use Propositions \ref{Proposition_Jack_observable_product},
\ref{Proposition_Jack_observable_projection}, \ref{Proposition_Jack_observable_sum}
with a particular choice of $\lambda$: $\lambda_1=\dots=\lambda_\ell=1$,
$\lambda_{\ell+1}=\lambda_{\ell+2}=\dots=0$. For such $\lambda$, Jack polynomial
coincides with elementary symmetric function,  cf.\ \cite[Chapter VI]{M}:
$$
 J_{1^{\ell},0^{N-\ell}}(x_1,\dots,x_N;\, \theta)=e_{\ell}(x_1,\dots,x_N).
$$
Then \eqref{eq_Jack_observable_product} becomes
\begin{equation}
\label{eq_x10}
 \E e_{\ell}(\a \boxtimes_{\beta}\b)=\frac{e_{\ell}(\a) e_\ell(\b)}{{N \choose \ell}}.
\end{equation}
On the other hand, evaluating the coefficient of $z^{N-\ell}$,
\eqref{eq_ff_multiply} is equivalent to
\begin{equation} \label{eq_x11} \E
e_{\ell}(\a \boxtimes_{\beta}\b)=\frac{1}{N!} \sum_{\sigma\in S_N} e_{\ell}(a_1
b_{\sigma(1)}, a_2 b_{\sigma(2)},\dots,a_N b_{\sigma(N)}).
\end{equation}
It is straightforward to check that the right--hand sides of \eqref{eq_x10} and
\eqref{eq_x11} are the same.

\smallskip

Further, \eqref{eq_Jack_observable_projection} becomes
\begin{equation}
\label{eq_x12}
 \E e_{\ell}( \pi^{N\to k}_\beta(\a))=e_{\ell}(\a) \cdot \frac{k(k-1)\cdots
 (k-\ell+1)}{N(N-1)\cdots (N-\ell+1)},
\end{equation}
which is the same expression as the coefficient of $z^{k-\ell}$ in
\eqref{eq_ff_projection}.

\smallskip

 We finally apply  Proposition \ref{Proposition_Jack_observable_sum}
 with $\lambda_1=\lambda_2\dots=\lambda_\ell=1$,
 $\lambda_{\ell+1}=\dots=\lambda_N=0$. Using Remark \ref{Remark_JLR} we conclude
 that the right--hand side has $\ell+1$ term and the $(p+1)$st term has
 $\mu_1=\mu_2\dots=\mu_p=1$,
 $\mu_{p+1}=\dots=\mu_N=0$;  $\nu_1=\nu_2\dots=\nu_{\ell-p}=1$,
 $\nu_{\ell-p+1}=\dots=\nu_N=0$. We get
\begin{multline}
\label{eq_elementary_expectation_product}
  \E e_\ell(\a \boxplus_\beta \b)=N(N-1)\cdots(N-\ell+1)\sum_{p=0}^{\ell}
  c^{1^{\ell}}_{1^{p},1^{\ell-p}}( J^{\mathrm{dual}};\, \beta/2)
  \\ \times \frac{e_p(\a)}{N(N-1)\cdots(N-p+1)} \cdot   \frac{e_{\ell-p}(\b)}{
  N(N-1)\cdots(N-\ell+p+1)}.
\end{multline}
It remains to find the value of the constant $ c^{1^{\ell}}_{1^{p},1^{\ell-p}}(
J^{\mathrm{dual}};\, \beta/2)$. For that we use the automorphism $\omega_{\theta}$
of the algebra of symmetric functions in \emph{infinitely many variables} (cf.\
\cite[Chapter VI, Section 10]{M}) with $\theta=\beta/2$. It has the following action
on $e_\ell$:
$$
 \omega_\theta\bigl( J^{\mathrm{dual}}_{1^{\ell}}(\cdot;\, \theta)\bigr)=J_{(\ell,0,0,\dots)}(\cdot;\,
 \theta^{-1}),
$$
and therefore $ c^{1^{\ell}}_{1^{p},1^{\ell-p}}( J^{\mathrm{dual}};\, \beta/2)$ is
the coefficient of $\lambda=(\ell,0,0,\dots)$ in the expansion
\begin{equation}
\label{eq_product_complete}
 J_{(p,0,\dots)}(\cdot;\, 2/\beta) \cdot J_{(\ell-p,0,\dots)}(\cdot;\,
 2/\beta)=\sum_{\lambda} c^{\lambda}_{(p,0,\dots),(\ell-p,0,\dots)}(J;\, 2/\beta)
 \cdot J_\lambda(\cdot;\, 2/\beta).
\end{equation}
This coefficient is readily found by comparing the coefficient of the leading
monomial $x_1^{\ell}$ in both sides of \eqref{eq_product_complete}, and therefore
$c^{1^{\ell}}_{1^{p},1^{\ell-p}}( J^{\mathrm{dual}};\, \beta/2)=1$. We conclude that
\begin{equation}
\label{eq_elementary_expectation_product_simple}
  \E e_\ell(\a \boxplus_\beta \b)=\sum_{p=0}^{\ell}
 e_p(\a) e_{\ell-p}(\b)
  \frac{N(N-1)\cdots(N-\ell+1)}{N(N-1)\cdots(N-p+1) \cdot N(N-1)\cdots(N-\ell+p+1)} .
\end{equation}
On the other hand, the coefficient of $z^{N-\ell}$ in \eqref{eq_ff_add} gives
\begin{equation}
\label{eq_x13}
 \E e_\ell(\a \boxplus_\beta \b) = \frac{1}{N!} \sum_{\sigma\in S(N)}
 e_\ell(a_1+b_{\sigma(1)},a_2+b_{\sigma(2)},\dots,a_N+b_{\sigma(N)}).
\end{equation}
One readily checks that the right--hand sides of
\eqref{eq_elementary_expectation_product_simple} and \eqref{eq_x13} give the same
expression.

\subsection{Proof of Theorem \ref{Theorem_crystal}}
\label{Section_Proof_of_Th_crystal_1} The proof is based on the following two limit
relations, which can be found e.g.\ in \cite[Proposition 7.6]{Stanley}\footnote{The
parameter $\alpha$ of \cite{Stanley} is $\theta^{-1}$.}:
  \begin{equation}
  \label{eq_Jack_infty}
   \lim_{\theta\to\infty} J_\lambda(x_1,\dots,x_N;\, \theta)=\prod_{i=1}^{N}
   \left[e_i(x_1,\dots,x_N)\right]^{\lambda_i-\lambda_{i+1}}.
  \end{equation}
  \begin{equation}
  \label{eq_Jack_zero}
   \lim_{\theta\to 0} J_\lambda(x_1,\dots,x_N;\, \theta)=m_\lambda(x_1,\dots,x_N),
  \end{equation}
  where $e_k$ is the elementary symmetric function and $m_\lambda$ is the monomial symmetric function.

\smallskip

We start with $\lim_{\beta\to\infty}\a \boxtimes_{\beta} \b$. Take any collection of
polynomials $f_1,\dots,f_m$ in $N$ variables. We aim to prove that the following
limits exist and satisfy
\begin{equation}
\label{eq_convergence_delta}
 \lim_{\beta\to\infty} \E \Biggl[\prod_{i=1}^m f_i(\a \boxtimes_{\beta} \b) \Biggr]
 =\lim_{\beta\to\infty} \prod_{i=1}^m \Biggl[\E f_i(\a \boxtimes_{\beta} \b)\Biggr].
\end{equation}
This precisely means that $\lim_{\beta\to\infty} \a \boxtimes_\beta \b$ is a delta
function at a certain point, and then Theorem \ref{Theorem_independence} would
identify this point with roots of $Q^{\boxtimes}(z)$. Note that under Conjecture
\ref{Conjecture_positivity}, $\a \boxtimes_\beta b$ is a bona--fide random variable,
and therefore, convergence to a delta--function for polynomial test functions $f_i$
would imply a similar convergence for arbitrary continuous test--functions (without
Conjecture \ref{Conjecture_positivity}, formally, there might be no such
implication).

Since $\a \boxtimes_{\beta} \b$ is supported on \emph{ordered} $N$--tuples of reals,
it suffices to consider \emph{symmetric} polynomials $f_i$ in
\eqref{eq_convergence_delta}. Further, since \eqref{eq_convergence_delta} is
multi-linear in $f_i$, it suffices to check it on an arbitrary basis of the
algebra $\Lambda_N$ of symmetric polynomials. This algebra is generated by elementary
symmetric functions $e_1,\dots,e_N$, and so we can choose functions of the form
\begin{equation}
\label{eq_e_basis}
 e_{(\lambda)}:= e_1^{\lambda_1-\lambda_2} e_2^{\lambda_2-\lambda_3} \cdots e_{N-1}^{\lambda_{N-1}-\lambda_N}
 e_N^{\lambda_N},\quad \lambda_1\ge\lambda_2\ge\dots\ge\lambda_N\ge 0
\end{equation}
as such basis. Therefore, \eqref{eq_convergence_delta} reduces to the statement that
for any $\lambda$:
\begin{equation}
\label{eq_convergence_delta_e}
 \lim_{\beta\to\infty} \E \left[ e_{(\lambda)}(\a \boxtimes_{\beta} \b)\right]
 =\lim_{\beta\to\infty} \prod_{i=1}^N \E \left[ e_i^{\lambda_{i}-\lambda_{i+1}}(\a \boxtimes_{\beta} \b)
 \right].
\end{equation}
We now prove \eqref{eq_convergence_delta_e}. Fix $\lambda$ and decompose
$e_{(\lambda)}$ into a linear combination of Jack polynomials
\begin{equation}
\label{eq_x14}
 e_{(\lambda)}(x_1,\dots,x_N)=\sum_{\mu} T_{\lambda}^{\mu}(\beta/2)
 J_\mu(x_1,\dots,x_N;\, \beta/2).
\end{equation}
Note that the sum in \eqref{eq_x14} is finite, as
$\mu=(\mu_1\ge\mu_2\ge\dots\ge\mu_N\ge 0)$ is required to satisfy $\sum_{i=1}^N
\mu_i=\sum_{i=1}^N\lambda_i$. Further, \eqref{eq_Jack_infty} implies that
$$
 \lim_{\beta\to\infty} T_{\lambda}^{\mu}(\beta/2)=\begin{cases} 1,& \lambda=\mu,\\
 0,&\text{otherwise.} \end{cases}
$$
Thus, using Proposition \ref{Proposition_Jack_observable_product},
\eqref{eq_Jack_infty}, and \eqref{eq_x10} we get
\begin{multline*}
 \lim_{\beta\to\infty} \E \bigl[ e_{(\lambda)}(\a\boxtimes_\beta \b)\bigr]=\lim_{\beta\to\infty}\sum_{\mu}
  T_{\lambda}^{\mu}(\beta/2) \frac{J_\mu(\a;\, \beta/2) J_\mu(\b;\,
 \beta/2)}{J_\mu(1,\dots,1;\,\beta/2)}\\= \frac{\prod_{i=1}^N e_i^{\lambda_{i}-\lambda_{i+1}}(\a) \prod_{i=1}^N
 e_i^{\lambda_{i}-\lambda_{i+1}}(\b)}{\prod_{i=1}^N e_i^{\lambda_{i}-\lambda_{i+1}}(\underbrace{1,\dots,1}_N)}=
 \prod_{i=1}^N \E\bigl[ e_i^{\lambda_{i}-\lambda_{i+1}}(\a\boxtimes_\beta \b)\bigr]=
 \lim_{\beta\to\infty}  \prod_{i=1}^N \E \bigl[
 e_i^{\lambda_{i}-\lambda_{i+1}}(\a\boxtimes_\beta
 \b)\bigr],
\end{multline*}
which proves \eqref{eq_convergence_delta_e}.

\medskip

Next, we investigate $\lim_{\beta\to\infty} \pi^{N\to k}_{\beta}(\a)$. Again it
suffices to prove that for every $\lambda_1\ge\dots\ge\lambda_k\ge 0$,
\begin{equation}
\label{eq_convergence_projection_delta_e}
 \lim_{\beta\to\infty} \E \left[ e_{(\lambda)}(\pi^{N\to k}_{\beta}(\a))\right]
 =\lim_{\beta\to\infty} \prod_{i=1}^k \E \left[ e_i^{\lambda_{i}-\lambda_{i+1}}(\pi^{N\to k}_{\beta}(\a))
 \right].
\end{equation}
Using Proposition \ref{Proposition_Jack_observable_projection},
\eqref{eq_Jack_infty}, and \eqref{eq_x12} we have
\begin{multline*}
 \lim_{\beta\to\infty} \E \left[ e_{(\lambda)}(\pi^{N\to k}_{\beta}(\a))\right]
 =\lim_{\beta\to\infty} \sum_{\mu}
  T_{\lambda}^{\mu}(\beta/2) J_{\mu}(\a;\, \beta/2)
  \frac{(k\beta/2)_{\mu;\beta/2}}{(N\beta/2)_{\mu;\beta/2}}
\\ = \prod_{i=1}^k e_i^{\lambda_{i}-\lambda_{i+1}}(\a)
\prod_{\begin{smallmatrix}(i,j)\in \mathbb
 Z_{>0}\times \mathbb Z_{>0}\\ j\le \lambda_i\end{smallmatrix}} \frac{k-i+1}{N-i+1}
 \\=\prod_{i=1}^k e_i^{\lambda_i-\lambda_{i+1}}(\pi^{N\to k}_\beta(\a))=
 \lim_{\beta\to\infty} \prod_{i=1}^k e_i^{\lambda_i-\lambda_{i+1}}(\pi^{N\to
 k}_\beta(\a)).
\end{multline*}

\medskip

Finally, we turn to $\lim_{\beta\to\infty} \a \boxplus_\beta \b$, and again prove
that for every $\lambda_1\ge\dots\ge\lambda_N\ge 0$
\begin{equation}
\label{eq_convergence_sum_delta_e}
 \lim_{\beta\to\infty} \E \left[ e_{(\lambda)}(\a \boxplus_{\beta} \b)\right]
 =\lim_{\beta\to\infty} \prod_{i=1}^N \E \left[ e_i^{\lambda_{i}-\lambda_{i+1}}(\a \boxplus_{\beta} \b)
 \right].
\end{equation}

Thus, using Proposition \ref{Proposition_Jack_observable_sum} and
\eqref{eq_Jack_infty} we get
\begin{multline*}
 \lim_{\beta\to\infty} \E \bigl[ e_{(\lambda)}(\a\boxplus_\beta \b)\bigr]\\=
  \sum_{\nu,\mu}
  \lim_{\beta\to\infty} \left[ c^{\lambda}_{\mu,\nu}(J^{\mathrm {dual}};\, \beta/2)\cdot
  \frac{(N\beta/2)_{\lambda;\beta/2}}{(N\beta/2)_{\nu;\beta/2} (N\beta/2)_{\mu;\beta/2} }\right]
   \prod_{i=1}^N \left[ e_i^{\mu_{i}-\mu_{i+1}}(\a) e_i^{\nu_i-\nu_{i+1}}(\b) \right].
\end{multline*}
By degree considerations, $c^{\lambda}_{\mu,\nu}(J^{\mathrm {dual}};\, \beta/2)$ is
non-zero only if $\sum_{i} (\mu_i+\nu_i)=\sum_i \lambda_i$. In this case
$$
 \lim_{\beta\to\infty} \frac{(N\beta/2)_{\lambda;\beta/2}}{(N\beta/2)_{\nu;\beta/2} (N\beta/2)_{\mu;\beta/2}
 }= \frac{\prod\limits_{j\le
 \lambda_i} \bigl(N+1-i\bigr)}{\prod\limits_{j\le
 \lambda_i } \bigl(N+1-i \bigr) \cdot \prod\limits_{j\le
 \lambda_i } \bigl(N+1-i \bigr)}=
  \prod_{i=1}^N (N+1-i)^{\lambda_i-\mu_i-\nu_i}
$$
For the value of the constant $  \lim_{\beta\to\infty}
c^{\lambda}_{\mu,\nu}(J^{\mathrm {dual}};\, \beta/2)$ we use the automorphism
$\omega_{\theta}$ of the algebra of symmetric functions in infinitely many variables
(cf.\ \cite[Chapter VI, Section 10]{M}) with $\theta=\beta/2$. It has the following
action on Jack polynomials:
$$
 \omega_\theta\bigl( J^{\mathrm{dual}}_{\lambda}(\cdot;\, \theta)\bigr)=J_{\lambda'}(\cdot;\,
 \theta^{-1}),
$$
where $\lambda'$ is the transpose partition, defined through
$$
 \lambda'_j=|\{i\in \mathbb Z_{>0}:\lambda_i\ge j\}|, \quad j=1,2,\dots.
$$
Then using \eqref{eq_Jack_zero} we conclude
$$
 \lim_{\beta\to\infty} c^{\lambda}_{\mu,\nu}(J^{\mathrm {dual}};\, \beta/2)=
 c^{\lambda'}_{\mu',\nu'}(m),
$$
where $m$ stands for the monomial symmetric functions; that is, the coefficients are
defined from the decomposition
$$
 m_{\mu'} m_{\nu'}=\sum_{\lambda'} c^{\lambda'}_{\mu',\nu'}(m) m_{\lambda'}.
$$
The monomial symmetric functions are easy to multiply directly:
$c^{\lambda'}_{\mu',\nu'}(m)$ counts the number of ways to represent the vector
$(\lambda'_i)_{i=1,2,\dots}$ as a sum of $(\mu'_i)_{i=1,2,\dots}$ and a permutation
of the coordinates of the vector $(\nu'_i)_{i=1,2,\dots}$.

It remains to check that
\begin{multline}
\label{eq_x15}
  \sum_{\nu,\mu}
   c^{\lambda'}_{\mu',\nu'}(m) \prod_{i=1}^N (N+1-i)^{\lambda_i-\mu_i-\nu_i}
   \prod_{i=1}^N \left[ e_i^{\mu_{i}-\mu_{i+1}}(\a) e_i^{\nu_i-\nu_{i+1}}(\b) \right]
   \\
   = \prod_{\ell=1}^N \left[\sum_{p=0}^{\ell}
 e_p(\a) e_{\ell-p}(\b)
  \frac{N(N-1)\cdots(N-\ell+1)}{N(N-1)\cdots(N-p+1) \cdot N(N-1)\cdots(N-\ell+p+1)}
  \right]^{\lambda_\ell-\lambda_{\ell+1}},
\end{multline}
which is straightforward given the above description of
$c^{\lambda'}_{\mu',\nu'}(m)$.

\subsection{Proof of Theorem \ref{Theorem_crystallization}}
\label{Section_Proof_of_Th_crystal_2}

Let us start by computing the constant $Z_N$ in Definition \ref{def_betacorner}. For
that we use the Dixon--Anderson integration formula, see \cite{Dixon},
\cite[Exercise 4.2, q.\ 2]{For}, which reads
\begin{multline}
\label{eq_Dixon_formula}
 \int_T \prod_{1\le i<j\le n} (t_i-t_j) \prod_{i=1}^n \prod_{j=1}^{n+1}
 \frac{|t_i-a_j|^{\alpha_j-1}}{|b-t_i|^{\alpha_j}}
 dt_1\cdots dt_n
 \\
 = \dfrac{\prod\limits_{j=1}^{n+1}\Gamma(\alpha_j)}{\Gamma\left(\sum\limits_{j=1}^{n+1} \alpha_j\right)} \prod_{1\le i<j \le n+1}
 (a_i-a_j)^{\alpha_i+\alpha_j-1} \prod_{i=1}^{n+1} |b-a_i|^{\alpha_i-\sum_{j=1}^{n+1} \alpha_j} ,
\end{multline}
where the domain of integration $T$ is given by
$$
 a_1< t_1 <a_2<t_2\dots<t_n<a_{n+1}.
$$
Choosing $b=\infty$, $\alpha_i=\beta/2$, we verify \eqref{eq_normalization} by
induction in $N$.

\bigskip

The next step is to find $\tilde x_i^k$. We do this sequentially: first, for $k=N-1$
and $1\le i \le N-1$, then for $k=N-2$, etc, until we reach $k=1$. Using the
definition of the $\beta$--corners process and the formula for $Z_N$ we write down
the conditional distribution of $x_i^{k-1}$, $1\le i\le k-1$ given $x_i^k$, $1\le
i\le k$:
\begin{multline}
\label{eq_cond_denstity}
 P(x_1^{k-1},\dots,x_{k-1}^{k-1}\mid x_1^k,\dots,x_k^k)=
 \frac{\Gamma(k\beta/2)} {\Gamma(\beta/2)^{k}}
 \frac{\prod\limits_{1\le i <j \le k-1}(x_j^{k-1}-x_i^{k-1})}{\prod\limits_{1\le i<j \le k}
 (x_j^k-x_i^k)^{\beta-1}} \\ \times
 \left( \prod_{i=1}^k \prod_{j=1}^{k-1}\left|x_i^k-x_j^{k-1}\right|
 \right)^{\beta/2-1}.
\end{multline}
As $\beta\to\infty$, the density (as a function of $x_1^{k-1},\dots,x_{k-1}^{k-1}$)
concentrates near the point where the second line of \eqref{eq_cond_denstity} is
maximized, so we need to solve the maximization problem:
\begin{equation}
\label{eq_maximization}
 \prod_{i=1}^k \prod_{j=1}^{k-1}\left|x_i^k-x_j^{k-1}\right| \to \max, \quad \text{ with
 fixed } x_1^k,\dots,x_k^k.
\end{equation}
Taking logarithmic derivatives of \eqref{eq_maximization} in
$x_1^{k-1},\dots,x_{k-1}^{k-1}$ we find that the optimal point $(\tilde
x_1^{k-1},\dots,\tilde x_{k-1}^{k-1})$ should satisfy the $k-1$ equations
\begin{equation}
\label{eq_optim_equations}
 \sum_{j=1}^{k} \frac{1}{\tilde x_{i}^{k-1}-x_{j}^{k}} = 0,\quad i=1,\dots,k-1.
\end{equation}
Observe that \eqref{eq_optim_equations} is precisely the collection of equations that define the $k-1$ roots
of the derivative of the polynomial $f(u)=\prod_{j=1}^k (u-x^k_j)$. This implies the
first part of \eqref{eq_crystallization_formula} of Theorem
\ref{Theorem_crystallization}.

\medskip

For the second part we Taylor-expand the density of the $\beta$--corners process
near the point $\tilde x_i^k$, $1\le i\le k \le N$. Set
$$
 x_i^k=\tilde x_i^k+ \frac{ \Delta x_i^k}{\sqrt{\beta}}
$$
and rewrite \eqref{eq_beta_corners_def} as
\begin{equation} \label{eq_beta_inf_expansion}
\begin{split}
  \frac{1}{Z_N}
  \prod_{k=1}^{N-1} &\left[\prod_{1\le i<j\le k} (\tilde x_j^k-\tilde x_i^k)^{2-\beta}\right] \cdot \left[\prod_{a=1}^k \prod_{b=1}^{k+1}
 |\tilde x^k_a-\tilde x^{k+1}_b|^{\beta/2-1}\right]
 \\ \times \prod_{k=1}^{N-1}&\left[\prod_{1\le i<j\le k} \left(1+\frac{1}{\sqrt{\beta}}\cdot
 \frac{\Delta x_j^k-\Delta x_i^k}{\tilde x_j^k-\tilde x_i^k}\right)^{2-\beta}\right] \cdot \left[\prod_{a=1}^k \prod_{b=1}^{k+1}
 \left(1+\frac{1}{\sqrt{\beta}}\cdot \frac{\Delta x^k_a-\Delta x^{k+1}_b}{\tilde x^k_a-\tilde x^{k+1}_b}\right)^{\beta/2-1}\right]
 \\
 =&  \frac{1}{Z_N}
  \prod_{k=1}^{N-1} \left[\prod_{1\le i<j\le k} (\tilde x_j^k-\tilde x_i^k)^{2-\beta}\right] \cdot \left[\prod_{a=1}^k \prod_{b=1}^{k+1}
 |\tilde x^k_a-\tilde x^{k+1}_b|^{\beta/2-1}\right]
 \\ \times& \prod_{k=1}^{N-1} \exp\left[ -\sqrt{\beta}\sum_{1\le i<j\le k}
 \frac{\Delta x_j^k-\Delta x_i^k}{\tilde x_j^k-\tilde x_i^k} +\frac{\sqrt{\beta}}{2} \sum_{a=1}^k\sum_{b=1}^{k+1}
 \frac{\Delta x^k_a-\Delta x^{k+1}_b}{\tilde x^k_a-\tilde x^{k+1}_b}\right]
\\ \times& \prod_{k=1}^{N-1} \exp\left[ \sum_{1\le i<j\le k}
 \frac{(\Delta x_j^k-\Delta x_i^k)^2}{2(\tilde x_j^k-\tilde x_i^k)^2} - \sum_{a=1}^k\sum_{b=1}^{k+1}
 \frac{(\Delta x^k_a-\Delta x^{k+1}_b)^2}{4(\tilde x^k_a-\tilde x^{k+1}_b)^2}+O(\beta^{-1/2})\right]
\end{split}
\end{equation}
As $\beta\to\infty$, the last line of \eqref{eq_beta_inf_expansion} gives the
desired density of $\xi^k_i$ in the $\infty$--corners process, and it remains to
show that the fourth line of \eqref{eq_beta_inf_expansion} is identically equal to
$1$. Indeed, the coefficient of $\Delta x_i^k$ in the expression under exponent is
\begin{equation}
\label{eq_linear_term}
 -\sqrt{\beta}\sum_{\begin{smallmatrix}1\le j \le k,\\ j\ne i\end{smallmatrix}} \frac{1}{\tilde x_i^k-\tilde x_j^k}
 +\frac{\sqrt{\beta}}{2} \sum_{j=1}^{k-1}
 \frac{1}{\tilde x_i^k-\tilde x_j^{k-1}}+\frac{\sqrt{\beta}}{2} \sum_{j=1}^{k+1}
 \frac{1}{\tilde x_i^k-\tilde x_j^{k+1}}
\end{equation}
The last term in \eqref{eq_linear_term} is zero because of the equations
\eqref{eq_optim_equations}. For the first two terms, recall that $\tilde x_i^{k}$
and $\tilde x^i_{k-1}$ are roots of a polynomial and its derivative, i.e.\
\begin{equation}
\label{eq_x1}
 \sum_{a=1}^{k} \prod_{j\ne a} (u-\tilde x_j^{k}) = k \prod_{a=1}^{k-1} (u-\tilde x_a^{k-1})
\end{equation}
We then differentiate \eqref{eq_x1} in $u$ and plug in $u=\tilde x_i^{k}$ to get
\begin{equation}
\label{eq_x2}
 2 \sum_{a=1}^{k} \prod_{j \ne a, i} (\tilde x_i^{k}-\tilde x_j^{k})= k \sum_{j=1}^{k-1} \prod_{a\ne
 j} (\tilde x_i^{k}-\tilde x_a^{k-1})
\end{equation}
and plug $u=\tilde x_i^{k}$ into \eqref{eq_x1} to get
\begin{equation}
\label{eq_x3}
 \prod_{j\ne i} (\tilde x_i^{k}-\tilde x_j^{k}) = k \prod_{a=1}^{k-1} (\tilde x_i^{k}-\tilde
 x_a^{k-1}).
\end{equation}
Dividing \eqref{eq_x2} by \eqref{eq_x3}, we conclude that the first two terms in
\eqref{eq_linear_term} cancel out. As a result, we see that the contribution of the fourth
line of \eqref{eq_beta_inf_expansion} must vanish as $\beta\to\infty$.

\smallskip

The last ingredient of the proof of Theorem \ref{Theorem_crystallization} is to
explain that the density \eqref{eq_Gaussian_density} is integrable, i.e., that the
inverse covariance matrix arising in this density is indeed positive definite. This
would have been immediate, if all the terms in the exponent had negative signs, yet
the $i<j$ sum is positive, and therefore, an additional clarification is necessary.
We prove that the integral of \eqref{eq_Gaussian_density} is finite by integrating
the variables $\xi_{i}^k$ sequentially in $k$; that is, we first integrate over
$\xi_1^1$, then over $\xi^2_1$, $\xi^2_2$, etc.
Each step is an integration over
$k-1$ variables $\xi^{k-1}_1,\dots,\xi^{k-1}_{k-1}$, and the integral is
the limit (as $\beta\to\infty$) of the identity expressing the unit total mass of the
conditional probability
 \eqref{eq_cond_denstity}. Repeating the argument \eqref{eq_beta_inf_expansion}, this limit is an
 identity holding for any $k$ reals $\zeta_1,\dots,\zeta_k$:
 \begin{equation}
 \label{eq_Gaussian_evaluation}
  \int\dots\int \exp\left(-\sum_{a=1}^k \sum_{b=1}^{k-1}
  \frac{(\zeta_a-\xi_b)^2}{(x_a^k-x_b^{k-1})^2} \right) d\xi_1 \dots d\xi_{k-1}=Z\cdot\exp\left(
  -2\sum_{1\le a<b \le k} \frac{(\zeta_a-\zeta_b)^2}{(x_a^k-x_b^k)^2}\right),
 \end{equation}
 where $Z>0$ does not depend on $\zeta_1,\dots,\zeta_k$; it can be explicitly evaluated, but we do not need its value here.
  Note that in the integrand of \eqref{eq_Gaussian_evaluation} the expression in the exponent is clearly negative, and therefore
 the question of convergence of the integral does not arise. However, iteratively
 using \eqref{eq_Gaussian_evaluation} for $k=2,\dots,N$, we compute the (finite)
 normalizing constant for the density \eqref{eq_Gaussian_density}.

\section{Discrete versions and generalities}
\label{Section_Discrete}

\subsection{Expectation identities at general $(q,t)$}

The main ingredient of our proofs, which is the expectation computations of Section
\ref{Section_expectations}, admits a generalization up to the hierarchy of symmetric
functions to the level of Macdonald polynomials.

For Propositions \ref{Proposition_Jack_observable_product},
\ref{Proposition_Jack_observable_sum} the Macdonald version is as follows.

\begin{proposition} \label{Proposition_Mac_observable_product} Fix signatures
$\lambda=(\lambda_1\ge\dots\ge\lambda_N)$ and $\mu=(\mu_1\ge \dots\ge\lambda_N)$,
and let $\nu=(\nu_1\ge\dots\nu_N)$ be a random signature with distribution given by
the weight $\nu\mapsto c^{\nu}_{\lambda,\mu}(\hat P;q,t)$.
 For each positive signature $\rho=\rho_1\ge\rho_2\ge\dots\ge\rho_N\ge 0$ we have
\begin{multline} \label{eq_Mac_observable_product}
 \E P_\rho(q^{\nu_1} t^{N-1},q^{\nu_2} t^{N-2},\dots,q^{\nu_N};\, q,t)\\=
 \frac{P_\rho(q^{\lambda_1} t^{N-1},q^{\lambda_2} t^{N-2},\dots,q^{\lambda_N};\, q,t) \cdot P_\rho(q^{\mu_1} t^{N-1},q^{\mu_2} t^{N-2},\dots,q^{\mu_N};\, q,t)}
 {P_\rho(t^{N-1},t^{N-2},\dots,1;\, q,t)}.
\end{multline}
\end{proposition}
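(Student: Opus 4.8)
The plan is to mirror the proof of Proposition \ref{Prop_muliplication_HO}, working directly at the level of Macdonald polynomials rather than their Heckman--Opdam degeneration, and using the label--variable symmetry \eqref{eq_Macd_symmetry} in place of \eqref{eq_HO_symmetry}. The whole argument is a single specialization of the defining relation \eqref{eq_LR_Mac}, so no limit transitions or convergence estimates are required; the finiteness of the sum in \eqref{eq_LR_Mac} guarantees that the relevant expectation is a finite sum and hence everything is purely algebraic.

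First I would start from \eqref{eq_LR_Mac}, which defines the coefficients $c^{\nu}_{\lambda,\mu}(\hat P;\,q,t)$, and evaluate both sides at the specialization $x_i = q^{\rho_i} t^{N-i}$, $i=1,\dots,N$. The left--hand side becomes $\hat P_\lambda(q^{\rho_1}t^{N-1},\dots;\,q,t)\cdot\hat P_\mu(q^{\rho_1}t^{N-1},\dots;\,q,t)$, while the right--hand side becomes $\sum_\nu c^{\nu}_{\lambda,\mu}(\hat P;\,q,t)\,\hat P_\nu(q^{\rho_1}t^{N-1},\dots;\,q,t)$, which is precisely $\E\,\hat P_\nu(q^{\rho_1}t^{N-1},\dots;\,q,t)$ over the random signature $\nu$. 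Next I would apply \eqref{eq_Macd_symmetry} to each of the three factors, swapping the roles of $\rho$ and the respective label $\lambda$, $\mu$, or $\nu$, to obtain
\[
 \hat P_\rho(q^{\lambda_1}t^{N-1},\dots;\,q,t)\cdot \hat P_\rho(q^{\mu_1}t^{N-1},\dots;\,q,t) = \E\, \hat P_\rho(q^{\nu_1}t^{N-1},\dots;\,q,t).
\]
It then remains only to unwind the normalization $\hat P_\rho(\cdot)=P_\rho(\cdot)/P_\rho(1,t,\dots,t^{N-1};\,q,t)$. Since $P_\rho$ is symmetric, the normalizing denominator equals $P_\rho(t^{N-1},t^{N-2},\dots,1;\,q,t)$, and clearing the common factors of this constant (two on the left, one inside the expectation on the right) produces exactly \eqref{eq_Mac_observable_product}.

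There is no genuine obstacle here beyond careful bookkeeping. The one thing to verify is that the specialization point $(q^{\rho_1}t^{N-1},\dots,q^{\rho_N})$ is exactly of the form required by \eqref{eq_Macd_symmetry}, and that the normalizing specialization $(1,t,\dots,t^{N-1})$ is its $\rho=(0,\dots,0)$ instance, so that the three normalizing constants agree. I should also remark that the weight $\nu\mapsto c^{\nu}_{\lambda,\mu}(\hat P;\,q,t)$ is a bona fide probability distribution only under Conjecture \ref{Conjecture_positivity}; absent that, the identity \eqref{eq_Mac_observable_product} still holds verbatim, with $\E$ interpreted as integration against a signed measure of total mass $1$.
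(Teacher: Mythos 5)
Your proposal is correct and takes essentially the same approach as the paper: both proofs consist of specializing the defining expansion \eqref{eq_LR_Mac} at the point $(q^{\rho_1}t^{N-1},q^{\rho_2}t^{N-2},\dots,q^{\rho_N})$ and applying the label--variable symmetry \eqref{eq_Macd_symmetry} to each factor, with your version merely running the chain of equalities from the product side while the paper starts from the expectation side. The unwinding of the normalizing constant $P_\rho(t^{N-1},\dots,t,1;\,q,t)$ is handled identically in both.
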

\begin{proof}
 Using \eqref{eq_Macd_symmetry} and definition of $c^{\nu}_{\lambda,\mu}(\hat P;q,t)$ we have
\begin{multline*}
 \E \left[ \frac{P_\rho(q^{\nu_1} t^{N-1},\dots,q^{\nu_N};\, q,t)}
 {P_\rho(t^{N-1},\dots,1;\, q,t)}\right]=
  \E \left[\frac{P_\nu(q^{\rho_1} t^{N-1},\dots,q^{\rho_N};\, q,t)}
 {P_\nu(t^{N-1},\dots,1;\, q,t)}\right]\\=\sum_{\nu} \frac{P_\nu(q^{\rho_1} t^{N-1},\dots,q^{\rho_N};\, q,t)}
 {P_\nu(t^{N-1},\dots,1;\, q,t)} c^{\nu}_{\lambda,\mu}(\hat P;q,t)
 = \frac{P_\lambda(q^{\rho_1} t^{N-1},\dots,q^{\rho_N};\, q,t)}
 {P_\lambda(t^{N-1},\dots,1;\, q,t)} \cdot \frac{P_\mu(q^{\rho_1} t^{N-1},\dots,q^{\rho_N};\, q,t)}
 {P_\mu(t^{N-1},\dots,1;\, q,t)}\\= \frac{P_\rho(q^{\lambda_1} t^{N-1},\dots,q^{\lambda_N};\, q,t)}
 {P_\rho(t^{N-1},\dots,1;\, q,t)} \cdot \frac{P_\rho(q^{\lambda_1} t^{N-1},\dots,q^{\lambda_N};\, q,t)}
 {P_\rho(t^{N-1},\dots,1;\, q,t)}. \qedhere
\end{multline*}
\end{proof}

For an analogue of Proposition \ref{Proposition_Jack_observable_projection} we need
a new definition generalizing $\pi^{\beta}_{N\to k}$.

\begin{definition}
 Fix an $0<k<N$, and a signature $\lambda=(\lambda_1\ge y_2\ge\dots\ge \lambda_N)$.
 Define a random signature $\nu=(\nu_1\ge\nu_k)$ with distribution $\pi^{q,t}_{N\to
 k}(\lambda)$ through the following decomposition
 \begin{equation}
 \label{eq_Macd_projection_def}
  \frac{P_\lambda(t^{1-N},t^{2-N},\dots,t^{-k},z_1,\dots,z_k;\, q,t)}{P_\lambda(t^{1-N},\dots,t^{-1},1;\,q,t)}=
  \sum_{\nu} \pi^{q,t}_{N\to
 k}(\lambda) [\nu]  \cdot
  \frac{P_\nu(z_1,\dots,z_k;\, q,t)}{P_\nu(t^{1-k},\dots,1;\, q,t)}.
 \end{equation}
\end{definition}

Plugging $z_i=t^{1-i}$ into \eqref{eq_Macd_projection_def} one proves that $\sum_\nu
\pi^{q,t}_{N\to
 k}(\lambda) [\nu]=1$. The non-negativity of weights follows from the branching rules for the Macdonald
polynomials, see \cite{M}.

Let us emphasize, that we use negative powers of $t$ in
\eqref{eq_Macd_projection_def}. On the other hand, the normalization of Macdonald
polynomials $\hat P_\lambda$ entering the definition of $c^{\nu}_{\lambda,\mu}(\hat
P;\, q,t)$ involved positive powers. For the latter this difference is not important
due to homogeneity of $P_\lambda$ and vanishing of $c^{\nu}_{\lambda,\mu}(\hat P;\,
q,t)$ unless $\sum_{i} (\lambda_i+\mu_i-\nu_i)=0$. However, for
\eqref{eq_Macd_projection_def} this becomes important.

\begin{proposition}
\label{Proposition_Mac_observable_projection}
 Fix $k<N$, a signature $\lambda$ of rank $N$, and let $\nu$ be $\pi^{q,t}_{N\to
 k}(\lambda)$--distributed. Then for any non-negative signature $\rho=(\rho_1\ge\dots\ge\rho_k\ge 0)$ of rank $k$ we have
 \begin{equation}
  \E P_{\rho}(q^{\nu_1}t^{k-1},\dots, q^{\nu_k};\, q,t)= \frac{P_\rho(t^{k-1},\dots,t,1;\,
  q,t)}{{P_\rho(t^{N-1},\dots,t,1;\,
  q,t)}}  \cdot P_\rho(q^{\lambda_1}t^{N-1},\dots,q^{\lambda_N};\,
  q,t)
 \end{equation}
 where we also treated $\rho$ as a signature of rank $N$ by adding $N-k$ zero
 coordinates.
\end{proposition}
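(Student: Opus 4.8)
The plan is to mirror the proof of Proposition \ref{Proposition_Mac_observable_product}, extracting the desired expectation from the defining relation \eqref{eq_Macd_projection_def} by one well-chosen specialization of the auxiliary variables $z_1,\dots,z_k$ together with two applications of the label--variable symmetry \eqref{eq_Macd_symmetry}, one at rank $k$ and one at rank $N$. The crucial choice, which I would make at the very start, is to set $z_i=q^{\rho_i}t^{1-i}$ for $i=1,\dots,k$, where $\rho$ is the fixed rank-$k$ signature in the statement. This particular shift is what makes the $\nu$-dependent normalizations collapse.

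First I would process the right--hand side of \eqref{eq_Macd_projection_def} under this specialization. By homogeneity of $P_\nu$, writing each argument as $q^{\rho_i}t^{1-i}=t^{1-k}\cdot q^{\rho_i}t^{k-i}$ pulls out a factor $t^{(1-k)|\nu|}$ from the numerator, and the same factor appears in the denominator $P_\nu(t^{1-k},\dots,1;q,t)$; since $|\nu|$ is \emph{not} fixed along the branching, this cancellation is essential, and it leaves exactly the normalized polynomial $\hat P_\nu(q^{\rho_1}t^{k-1},\dots,q^{\rho_k};q,t)$. The rank-$k$ label--variable symmetry rewrites this as $\hat P_\rho(q^{\nu_1}t^{k-1},\dots,q^{\nu_k};q,t)$. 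Summing against the weights $\pi^{q,t}_{N\to k}(\lambda)[\nu]$ then produces $\E P_\rho(q^{\nu_1}t^{k-1},\dots,q^{\nu_k};q,t)$ divided by the rank-$k$ normalization $P_\rho(t^{k-1},\dots,t,1;q,t)$.

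Next I would treat the left--hand side. The combined argument list $(t^{1-N},\dots,t^{-k},q^{\rho_1}t^{0},\dots,q^{\rho_k}t^{1-k})$ is not yet in canonical form, but multiplying every coordinate by $t^{N-1}$ and using homogeneity to extract the common factor $t^{(N-1)|\lambda|}$ (which cancels against the identical factor coming from the denominator $P_\lambda(t^{1-N},\dots,1;q,t)$) turns the argument, after reordering, into $(q^{\rho_1}t^{N-1},\dots,q^{\rho_k}t^{N-k},t^{N-k-1},\dots,t,1)$. This is precisely the principal specialization $q^{\rho_i}t^{N-i}$, $i=1,\dots,N$, for the rank-$N$ extension of $\rho$, the appended zeros accounting for the pure powers of $t$. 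Hence the left--hand side equals $\hat P_\lambda(q^{\rho_1}t^{N-1},\dots,q^{\rho_N};q,t)$, and the rank-$N$ label--variable symmetry rewrites it as $\hat P_\rho(q^{\lambda_1}t^{N-1},\dots,q^{\lambda_N};q,t)=P_\rho(q^{\lambda_1}t^{N-1},\dots,q^{\lambda_N};q,t)/P_\rho(t^{N-1},\dots,t,1;q,t)$, with the rank-$N$ normalization.

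Equating the two sides and clearing the rank-$k$ denominator then yields the asserted identity, with the ratio $P_\rho(t^{k-1},\dots,t,1;q,t)/P_\rho(t^{N-1},\dots,t,1;q,t)$ appearing exactly as the mismatch between the rank-$k$ and rank-$N$ normalizations. The only real subtlety — and the step I would be most careful about — is the bookkeeping of the powers of $t$: the shift $z_i=q^{\rho_i}t^{1-i}$ is forced so that $t^{(1-k)|\nu|}$ cancels on the right (this is where the non-fixed size $|\nu|$ would otherwise obstruct a clean expectation), while the global rescaling by $t^{N-1}$ on the left produces only a $|\lambda|$-dependent factor that cancels against its denominator. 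Once these homogeneity factors are tracked precisely, the two invocations of \eqref{eq_Macd_symmetry} complete the argument.
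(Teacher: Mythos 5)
Your proof is correct and takes essentially the same route as the paper's: the published argument is precisely your chain of equalities read in reverse, starting from $\E\, \hat P_\rho(q^{\nu_1}t^{k-1},\dots,q^{\nu_k};\,q,t)$, applying the rank-$k$ label--variable symmetry \eqref{eq_Macd_symmetry}, rescaling by homogeneity to match the specialization $z_i=q^{\rho_i}t^{1-i}$ in \eqref{eq_Macd_projection_def}, rescaling all arguments by $t^{N-1}$, and concluding with the rank-$N$ symmetry for $\rho$ extended by $N-k$ zeros. The homogeneity bookkeeping you single out as the key subtlety (the cancellation of the $|\nu|$- and $|\lambda|$-dependent powers of $t$ against the respective normalizations) is handled in the paper in exactly the same way.
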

\begin{proof}
 Using \eqref{eq_Macd_symmetry} and homogeneity of Macdonald polynomials we have
\begin{multline}
  \E\left[\frac{P_{\rho}(q^{\nu_1}t^{k-1},\dots, q^{\nu_k};\, q,t)}
  {P_{\rho}(t^{k-1},\dots,t, 1;\, q,t)}\right]=
    \E\left[\frac{P_{\nu}(q^{\rho_1}t^{k-1},\dots, q^{\rho_k};\, q,t)}
  {P_{\nu}(t^{k-1},\dots, 1;\, q,t)}\right]\\
  =\sum_{\nu} \pi^{q,t}_{N\to
 k}(\lambda) [\nu] \frac{P_{\nu}(q^{\rho_1}t^{k-1},\dots, q^{\rho_k};\, q,t)}
  {P_{\nu}(t^{k-1},\dots,t, 1;\, q,t)}=\sum_{\nu} \pi^{q,t}_{N\to
 k}(\lambda) [\nu] \frac{P_{\nu}(q^{\rho_1},\dots, q^{\rho_k} t^{1-k};\, q,t)}
  {P_{\nu}(t^{1-k},\dots,t^{-1}, 1;\, q,t)}\\
 =\frac{P_{\lambda}(q^{\rho_1},\dots, q^{\rho_k} t^{1-k}, t^{-k},\dots, t^{1-N};\, q,t)}
  {P_{\lambda}(t^{1-N},\dots,t^{-1}, 1;\, q,t)}
 =\frac{P_{\lambda}(q^{\rho_1} t^{N-1},\dots, q^{\rho_k} t^{N-k}, t^{N-k-1},\dots, 1;\, q,t)}
  {P_{\lambda}(t^{N-1},\dots,t, 1;\, q,t)}
 \\= \frac{P_{\rho}(q^{\lambda_1} t^{N-1},\dots, q^{\lambda_N};\, q,t)}
  {P_{\rho}(t^{N-1},\dots,t, 1;\, q,t)}. \qedhere
\end{multline}
\end{proof}
An analogue of Proposition \ref{Proposition_Mac_observable_projection} at $q=t$ is
implicitly used in \cite{GO}, \cite{Olsh_qGT}, \cite{Olsh_qMac} for the study of the
extended Gelfand--Tsetlin graph.

\smallskip

As in Section \ref{Section_Proof_of_Th_independence}, if we choose
$\rho_1=\dots=\rho_\ell=1$, $\rho_{\ell+1}=\rho_{\ell+2}=\dots=0$ in Propositions
\ref{Proposition_Mac_observable_product},
\ref{Proposition_Mac_observable_projection}, then the Macdonald polynomials would
turn into elementary symmetric functions $e_\ell$, and we get formulas for the
expectations of $e_\ell$. In particular, $q$ does not enter into these formulas in
any explicit form, which is a $(q,t)$--analogue of the $\beta$--independence in
Theorem \ref{Theorem_independence}.

\subsection{Crystallization for general $(q,t)$} The Law of Large
Numbers (crystallization) of Theorems \ref{Theorem_crystal},
\ref{Theorem_crystallization} is obtained from operations on Macdonald polynomials
$P_\lambda(\cdot;\, q,t)$ by a triple limit transition:
\begin{equation}
\label{eq_three_limits}
 q\to 1; \quad \quad t=q^{\theta},\, \theta\to +\infty;\quad \quad
 \lambda_i=\eps^{-1} r_i,\, \eps\to 0.
\end{equation}
In these theorems, we made the limit transitions in a particular order (first, $q\to
1$, $\lambda_i\to \infty$ to degenerate into random matrices, and only then
$\theta\to\infty$), but different orders of taking limits are also possible and
would lead to \emph{another} set of answers. We do not address the full
classification of the limiting behaviors here (it probably deserves a separate
publication), but only mention two possible scenarios.

\begin{enumerate}
\item If we start with $\theta\to\infty$ (so $t\to 0$), then Macdonald polynomials degenerate to $q$--Whittaker
functions, as discussed in details in \cite{GLO}, \cite{BigMac}. Two different
further $q\to 1$ limits were studied in the literature. The first one is parallel to
the degeneration of $q$--Whittaker functions to Whittaker functions: the particles
crystallize on a perfect lattice, while fluctuations are related to directed
polymers in random media, see \cite{BigMac}. Another limit in \cite{BorCorFer} leads
to more complicated Law of Large Numbers and Gaussian fluctuations.

\item We can first degenerate Macdonald polynomials into Jacks and the latter into
products of elementary symmetric functions, as in \eqref{eq_Jack_infty}. After
taking these limits, an analogue of the $\beta$--corners process would involve
weights given by products of Binomial coefficients, while the top--row (which was
$y_1<\dots<y_N$ in Definition \ref{def_betacorner}) is still discrete. Linearly
rescaling the coordinates of the top row one finds yet another Law of Large Numbers.
Using Stirling's formula and solving the associated maximization formula (as in the
proof of Theorem \ref{Theorem_crystallization}) one can explicitly find the limit
then.
It has the following description: $k$th particle of level $M-1$ splits the interval
between $k$th and $(k+1)$st particles on level $M$ in the proportion $k:(N-k)$.


\end{enumerate}

\end{document}